\newtheorem{theorem}{Theorem}[section]
\newtheorem{lemma}[theorem]{Lemma}
\newtheorem{corollary}[theorem]{Corollary}
\newtheorem{proposition}[theorem]{Proposition}
\newtheorem{remark}[theorem]{Remark}
\def\N{\mathbb{N}}
\def\Q{\mathbb{Q}}
\def\Z{\mathbb{Z}}
\def\F{\mathbb{F}}
\let\ds=\displaystyle
\let\ov=\overline
\let\wt=\widetilde
\def\Cl{{\mathcal C}\hskip-2pt{\ell}}
\newcommand{\cl}{c\hskip-1pt{\ell}}
\def\order{\raise1.5pt \hbox{${\scriptscriptstyle \#}$}}
\def\lien{\mathrel{\mkern-4mu}}
\def\too{\relbar\lien\rightarrow}
\def\tooo{\relbar\lien\relbar\lien\too}
\newcommand{\plus}{\ds\mathop{\raise 2.0pt \hbox{$\bigoplus $}}\limits}
\newcommand{\prd}{\ds\mathop{\raise 2.0pt \hbox{$  \prod   $}}\limits}
\newcommand{\sm}{\ds\mathop{\raise 2.0pt \hbox{$  \sum    $}}\limits}
\def\Sauf{\setminus}
\def\Kappa{\hbox{\large$\kappa$}}
\def\Nu{\hbox{\large$\nu$}}
\begin{document}

\markboth{Georges Gras}{}

\title[The $p$-rank $\varepsilon$-conjecture for towers of $p$-extensions]
{The $p$-rank $\varepsilon$-conjecture on class groups \\ is true 
for towers of $p$-extensions}

\author{Georges Gras}
\address{Villa la Gardette, 4 Chemin Ch\^ateau Gagni\`ere,
F-38520 Le Bourg d'Oisans}
\email{g.mn.gras@wanadoo.fr}

\begin{abstract} Let $p \geq 2$ be a given prime number.
We prove, for any number field $\Kappa$ and any integer 
$e \geq 1$, the $p$-rank $\varepsilon$-conjecture, on the $p$-class 
groups $\Cl_F$, for the family ${\mathscr F}_{\!\!\kappa}^{p^e}$
of towers $F/\Kappa$ built as successive degree $p$ cyclic extensions 
(without any other Galois conditions) such that $F/\Kappa$ be of 
degree $p^e$, namely: $\order (\Cl_F \otimes \F_p) 
\ll_{\kappa,p^e,\varepsilon} (\sqrt{D_F}\,)^\varepsilon$ 
for all $F \in {\mathscr F}_{\!\!\kappa}^{p^e}$, where $D_F$ 
is the absolute value of the discriminant (Theorem \ref{thmf}),
and more generally $\order (\Cl_F \otimes \Z/p^r\Z)
\ll_{\kappa,p^e,\varepsilon} (\sqrt{D_F}\,)^\varepsilon$ ($r \geq 1$ fixed).
This Note generalizes the case of the family ${\mathscr F}_{\!\Q}^{p}$ ({\it Genus theory 
and $\varepsilon$-conjectures on $p$-class groups}, J. Number Theory 207, 
423--459 (2020)), whose techniques appear to be ``universal'' for all relative degree
$p$ cyclic extensions and use the Montgomery--Vaughan result on prime numbers. 
Then we prove, for ${\mathscr F}_{\!\!\kappa}^{p^e}$, the $p$-rank 
$\varepsilon$-conjecture on the cohomology groups ${\rm H}^2({\mathcal G}_F,\Z_p)$
of Galois $p$-ramification theory over $F$ (Theorem \ref{thmf2}) and for some
other classical finite $p$-invariants of $F$, as the Hilbert kernels and the logarithmic 
class groups.
\end{abstract}

\keywords {$p$-class groups; discriminants; $\varepsilon$-conjectures; prime numbers}
\subjclass[2010]{11R29, 11R37}

\date{February 20, 2020} 

\maketitle

\section{Introduction}

For any prime number $p \geq 2$ and any number field $F$, we denote by $\Cl_F$ the 
$p$-class group of $F$ (in the restricted sense for $p=2$); the precise sense
does not matter, the case of ordinary sense being a consequence. 

\smallskip
To avoid any ambiguity, 
we shall write $\Cl_F \otimes \F_p$, isomorphic to the ``$p$-torsion group''
also denoted $\Cl_F[p]$ in the literature, only giving the $p$-rank  of $\Cl_F$:
$${\rm rk}_p(\Cl_F) := {\rm dim}_{\F_p}(\Cl_F/\Cl_F^p). $$

We refer to our paper \cite{Gr00} for an introduction with some history about the notion 
of $\varepsilon$-conjecture, initiated by Ellenberg--Venkatesh \cite{EV} 
by means of reflection theorems \cite{Gr5} and Arakelov class groups \cite{Sc},
then developed by many authors as Frei--Pierce--Turnage-Butterbaugh--Widmer--Wood
\cite{EPW,EV,FW,PTW1,PTW2,W,Wid},\,\ldots, then 
the related density results of Koymans--Pagano \cite{KP}, all these questions 
being in relation with the classical heuristics/conjectures on class groups 
\cite{AM,CL,CM,DJ,Ge,Ma}.

\medskip
We prove, unconditionally:

\medskip \noindent
{\bf Main results.} {\it Let $\Cl_F$ denotes the $p$-class group 
of a number field $F$ and $D_F$ the absolute value of its discriminant. 
For $\Kappa$ and $e$ fixed, the general $p$-rank $\varepsilon$-conjecture 
is true for the family ${\mathscr F}_{\!\!\kappa}^{p^e}$ of  
$p$-towers $F/\Kappa$ of degree $p^e$
($\Kappa =F_0 \subset \cdots F_{i-1} \subset F_i \cdots \subset F_e=F$) 
with $F_i/F_{i-1}$ cyclic of degree $p$ for all $i \in [1,e]$ 
(the $F/\Kappa$ will be called ``$p$-cyclic-towers''): 
for all $\varepsilon > 0$, there exists a constant 
$C_{\kappa ,p^e,\varepsilon}$, such that  (Theorem \ref{thmf}):
$$\order (\Cl_F \otimes \F_p) 
\leq C_{\kappa ,p^e,\varepsilon} \cdot (\sqrt{D_F}\,)^\varepsilon , 
\ \ \hbox{for all $F \in {\mathscr F}_{\!\!\kappa}^{p^e}$.}$$ 

Moreover, the parameter $\Kappa$ only intervenes by its degree 
$d_\kappa \leq d$ and the $p$-rank ${\rm rk}_p(\Cl_\kappa) \leq \rho$ 
of its class group (for $d$ and $\rho$ given), so that, in some sense,
$C_{\kappa ,p^e,\varepsilon} = C_{d,\rho ,p^e,\varepsilon}$.

\smallskip\noindent
We obtain the same result for the cohomology groups 
${\rm H}^2({\mathcal G}_F,\Z_p)$ of $p$-ramification theory, where
${\mathcal G}_F$ is the Galois group of the maximal $p$-ramified 
pro-$p$-extension of $F$, then, for example, for $p$-adic regulators, Hilbert's and 
regular kernels, Jaulent's logarithmic class groups 
(Theorem \ref{thmf2} and Remark \ref{divers}).}

\medskip\noindent
{\bf Note.} During the writing of this article, we have been informed of papers by 
Wang \cite[Theorem 1.1]{W} (dealing with the non-trivial case $\ell \ne p$ and mentioning, 
without proofs, the easier case $\ell = p$, for ${\rm Gal}(F/\Q) \simeq (\Z/p\Z)^e$) 
and Kl\"uners--Wang \cite{KW} giving a proof for $\ell = p$ and extensions $F/\Kappa$ 
contained in (Galois) $p$-extensions; their results are based on other 
information (Cornell paper \cite{Cor} using genus theory in elementary 
$p$-extensions, then a generalization of Cornell approach in \cite{KW}). 
We thank Jiuya Wang for these communications, in particular the text \cite{KW} 
(in order to be published) from a lecture by the~authors.

\smallskip\noindent
Our result, using another point of view, is unconditional with computable 
constants and without any Galois conditions, contrary to \cite{KW} where 
the ``$\varepsilon$-inequality'' is valid for $D_F \gg 0$ and where the 
Galois closure of $F/\Kappa$ is assumed to be a $p$-extension.
In fact, if our result does not give the ``strong $\varepsilon$-conjecture''
$$\order (\Cl_F \otimes \Z_p) \ll_{\kappa ,p^e,\varepsilon} 
\cdot (\sqrt{D_F}\,)^\varepsilon, $$ 

it gives, for any fixed $r\geq 1$, the $\varepsilon$-inequality
(see Corollary \ref{CRank}):
$$\order (\Cl_F \otimes \Z/p^r\Z) \ll_{\kappa ,p^e,\varepsilon} 
\cdot (\sqrt{D_F}\,)^\varepsilon, \ \ 
\hbox{for all $F \in {\mathscr F}_{\!\!\kappa}^{p^e}$.}$$ 

However, this suggests well, considering also the density results of \cite{KP}, 
that the strong $\varepsilon$-conjecture is true ``for almost all'' elements of 
${\mathscr F}_{\!\!\kappa}^{p^e}$.

\section{Principle of the method}
We will perform an induction on the successive degree $p$ cyclic 
extensions of a tower $F \in {\mathscr F}_{\!\!\kappa}^{p^e}$,
the principles for such $p$-cyclic steps coming from \cite{Gr00}
dealing with the family ${\mathscr F}_{\!\!\Q}^{p}$. 
The method involves using fixed points exact sequences, for the 
invariants considered, and the definition of ``minimal relative discriminants'' 
built by means of the Montgomery--Vaughan result on prime~numbers.

\subsection{Tower of degree $p$ cyclic fields and relative class groups}
Let $\Kappa$ be any fixed number field and let:
$$\hbox{$F_0 = \Kappa \subset F_1 \cdots  F_{i-1} \subset F_i \cdots
\subset F_e=F$, $e \geq 1$,}$$
be a $p$-cyclic-tower of fields with ${\rm Gal}(F_i/F_{i-1}) \simeq \Z/p\Z$, 
for $1 \leq i \leq e$ (by abuse, we use the word of tower even if the fields $F_i$ 
are not necessarily Galois over $\Kappa$). 

\begin{remark}
{\rm For $p=2$, all the $2$-cyclic-towers are obtained inductively by means of
Kummer extensions
$F_i = F_{i-1}(\sqrt{a_{i-1}})$, $a_{i-1} \in F_{i-1}^\times \Sauf F_{i-1}^{\times 2}$;
for $p \ne 2$, one uses the classical Kummer process, considering
$F'_{i-1}:= F_{i-1}(\mu_p)$ and then taking $a'_{i-1} = b'^{\,e_\omega}_{i-1}$
in $F'_{i-1}$, where $e_\omega$ is the idempotent of $\Z_p[{\rm Gal}(F'_{i-1} / F_{i-1})]$ 
corresponding to the Teichm\"uller character $\omega$, so that $F'_i:=F'_{i-1}(\sqrt{a'_{i-1}})$
is decomposed over $F_{i-1}$ into $F_i/ F_{i-1}$ cyclic of degree $p$. 

\smallskip
We check easily that, for any $p$, the (non-$p$-cyclic) towers of degree $p^e$, of the form 
$F_i = F_{i-1}(\sqrt[p]{a_{i-1}})$, fulfill the $p$-rank $\varepsilon$-conjecture.

\smallskip
This gives many more extensions $F/\Kappa$ of degree $p^e$ which are not 
necessarily (Galois) $p$-extensions.}
\end{remark}

Put $k=F_{i-1}$, $K=F_i$ and $G={\rm Gal}(K/k) =: \langle \sigma \rangle$.
We shall use the obvious general exact sequence:
$$1 \to \Cl_K^* \too \Cl_K \mathop{\too}^{\nu} \Cl_K^{\, \nu}, $$
where $\Nu := 1+\sigma + \cdots + \sigma^{p-1}$ is the algebraic norm and
$\Cl_K^* = {\rm Ker}(\Nu)$. 

\smallskip
Recall that $\Nu = {\rm J} \circ {\rm N}$, where ${\rm N}$ is the arithmetic 
norm (or the restriction  of automorphisms
${\rm Gal}(H_K/K) \to {\rm Gal}(H_k/k)$ in the 
corresponding Hilbert's class fields $H_K$, $H_k$), 
which yields ${\rm N}(\Cl_K) \subseteq \Cl_k$, and where 
${\rm J} : \Cl_k \to \Cl_K$ 
comes from extension of ideals (or is the 
transfer map ${\rm Gal}(H_k/k) \to {\rm Gal}(H_K/K)$); so 
$\Cl_K^{\,\nu} \simeq \Cl_{k}$ if and only if ${\rm N}$ is 
surjective (equivalent to $H_k \cap K = k$) and ${\rm J}$ 
injective (no capitulation).
Whence the following inequality for the $p$-ranks:
\begin{equation}\label{inequality}
{\rm rk}_p(\Cl_K) \leq {\rm rk}_p(\Cl_{k}) + {\rm rk}_p(\Cl_K^*).
\end{equation}

So, the main problem is to give an explicit upper bound of
${\rm rk}_p(\Cl_K^*)$ only depending on ${\rm rk}_p(\Cl_{k})$ and 
the number of ramified prime numbers in $K/k$. For this, we shall recall some
elementary properties of the finite $\Z_p[G]$-modules annihilated by 
$\Nu$ \cite{Gr2}.

\smallskip
Let $G = \langle \sigma \rangle \simeq \Z/p\Z$.
We consider a $\Z_p[G]$-module $M^*$ (of finite type) annihilated by 
$\Nu=1+ \cdots + \sigma^{p-1}$ (which will be $\Cl_K^*$)
for which we define the following filtration, for all $h \geq 0$ and $M^*_0=1$:
\begin{equation}\label{filtre}
M^*_{h+1}/M^*_h := (M^*/M^*_h)^G. 
\end{equation}

For all $h \geq 0$, $M^*_h = \{x \in M^*,\  x^{(1-\sigma)^h}=1 \}$\,\footnote{Note 
that if $M$ is a $\Z_p[G]$-module of finite type and $M^* = {\rm Ker}(\nu)$,
we have, in an obvious meaning, $(M^*)_h = (M_h)^*$, 
whence the writing $M^*_h$.},
the $p$-groups $M^*_{h+1}/M^*_h$ are elementary and the maps
$\ds M^*_{h+1}/M^*_h \,\mathop{-\!\!\!\too\,}^{1-\sigma} M^*_h/M^*_{h-1}$
are injective, giving a decreasing sequence for the orders 
$\order (M^*_{h+1}/M^*_h)$ as $h$ grows; whence:
\begin{equation}\label{ineqfond}
\order (M^*_{h+1}/M^*_h) \leq \order M^*_1, \ \, \hbox{for all $h \geq 0$}. 
\end{equation}

Since $M^*$ is a $\Z_p[G]/(\Nu)$-module and since
$\Z_p[G]/(\Nu) \simeq \Z_p[\zeta]$,
where $\zeta$ is a primitive $p$th root of unity, we may
write for ${\mathfrak p} = (1-\zeta)\,\Z_p[\zeta]$ and $s \geq 0$:
$$M^* \simeq \plus_{j=1}^s \Z_p[\zeta]/ {\mathfrak p}^{n_j},\ \, 
n_1 \leq n_2 \leq \cdots \leq n_s, $$
and the sub-modules $M^*_h$ are, in this $\Z_p[\zeta]$-structure, the following ones:
$$M^*_h \simeq \plus_{j,\ n_j \leq h} \Z_p[\zeta]/ {\mathfrak p}^{n_j},\ \,\hbox{for all $h \geq 0$}.$$

\begin{proposition} \label{propo1}
Under the condition $M^*{}^{\nu}=1$, the $p$-rank  of $M^*$ fulfills the inequality 
${\rm rk}_p(M^*) \leq (p-1) \cdot {\rm rk}_p (M^*_1)$.
\end{proposition}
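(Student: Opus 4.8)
The plan is to express the $p$-rank of $M^*$, which is an $\F_p$-dimension, entirely through the $(1-\sigma)$-filtration $(M^*_h)_h$, and then feed in the monotonicity \eqref{ineqfond} already at our disposal. Under $\Z_p[G]/(\Nu)\simeq\Z_p[\zeta]$ the generator $\sigma$ corresponds to $\zeta$, so $1-\sigma$ acts on $M^*$ as the uniformizer $\pi:=1-\zeta$ of the discrete valuation ring $\Z_p[\zeta]$, and by construction $M^*_h={\rm Ker}(\pi^h)$. The one arithmetic input I need is that $p$ is, up to a unit, the $(p-1)$st power of $\pi$: from $\prd_{i=1}^{p-1}(1-\zeta^i)=p$ together with the fact that each $(1-\zeta^i)/(1-\zeta)\in\Z_p[\zeta]^\times$, one gets $p=u\,\pi^{\,p-1}$ with $u$ a unit.

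First I would use this to identify the top of the filtration with the $p$-torsion. Since multiplication by $p$ agrees up to the unit $u$ with multiplication by $\pi^{\,p-1}$, the two kernels coincide, so $M^*_{p-1}={\rm Ker}(\pi^{\,p-1})=M^*[p]$; and for any finite abelian $p$-group the $p$-rank equals the $\F_p$-dimension of the $p$-torsion, whence
$${\rm rk}_p(M^*)={\rm dim}_{\F_p}(M^*[p])={\rm dim}_{\F_p}(M^*_{p-1}).$$

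Next I would telescope. The short exact sequences $1\to M^*_{h-1}\to M^*_h\to M^*_h/M^*_{h-1}\to1$ give ${\rm dim}_{\F_p}(M^*_{p-1})=\sm_{h=1}^{p-1}{\rm dim}_{\F_p}(M^*_h/M^*_{h-1})$, a sum of exactly $p-1$ terms, each bounded by ${\rm dim}_{\F_p}(M^*_1)$ by \eqref{ineqfond} (the passage from orders to dimensions being legitimate as every quotient is elementary). Since $M^*_1$ is itself elementary, ${\rm dim}_{\F_p}(M^*_1)={\rm rk}_p(M^*_1)$, and combining the displays yields the bound ${\rm rk}_p(M^*)\leq(p-1)\cdot{\rm rk}_p(M^*_1)$.

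The only genuinely delicate point is the identification ${\rm rk}_p(M^*)={\rm dim}_{\F_p}(M^*_{p-1})$: this is exactly where the ramification index $p-1$ of $p$ in $\Z_p[\zeta]$ enters, and one must verify that ``multiply by $p$'' and ``multiply by $\pi^{\,p-1}$'' differ only by a unit, so that the two torsion submodules coincide. Everything else is bookkeeping. As a cross-check (and alternative route) one may argue directly from the structure theorem $M^*\simeq\plus_{j=1}^s\Z_p[\zeta]/{\mathfrak p}^{n_j}$: a short computation gives ${\rm rk}_p(\Z_p[\zeta]/{\mathfrak p}^{n_j})=\min(n_j,p-1)$, while each summand contributes a single dimension to the socle $M^*_1$, so that ${\rm rk}_p(M^*)=\sm_{j=1}^s\min(n_j,p-1)\leq(p-1)\,s=(p-1)\,{\rm rk}_p(M^*_1)$.
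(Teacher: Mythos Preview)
Your proof is correct and follows essentially the same route as the paper: identify $M^*[p]=M^*_{p-1}$ via $p\,\Z_p[\zeta]={\mathfrak p}^{p-1}$, then telescope along the filtration and apply \eqref{ineqfond} to each of the $p-1$ successive quotients. The only cosmetic difference is that the paper phrases the telescoping in terms of orders rather than $\F_p$-dimensions (which is equivalent since $M^*_{p-1}$ and $M^*_1$ are elementary), and your additional cross-check via the explicit structure $M^*\simeq\bigoplus_j\Z_p[\zeta]/{\mathfrak p}^{n_j}$ does not appear in the paper.
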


\proof
Let $M^*[p] := \{x \in M^*, \ x^p=1\}$; since 
$p\,\Z_p[\zeta] = {\mathfrak p}^{p-1}$, we obtain that $M^*[p] = M^*_{p-1}$; 
then (from \eqref{ineqfond}):
$$\order M^*_{p-1} = \prd_{h=0}^{p-2} \order(M^*_{h+1}/M^*_h) \leq (\order M^*_1)^{p-1}.$$
Since $M^*_{p-1}$ and $M^*_1$ are elementary, the inequality 
on the $p$-ranks follows.
\qed \medskip

Considering $M=\Cl_K$ and $M^*$ in $K/k$ yields:
\begin{corollary} \label{coro0}
Let $K/k$ be a degree $p$ cyclic extension of number fields and let
$G={\rm Gal}(K/k)$. Then ${\rm rk}_p(\Cl_K^*) \leq 
(p-1)\,{\rm rk}_p(\Cl_K^{G*}) = (p-1)\,{\rm rk}_p(\Cl_K^G)$.
\end{corollary}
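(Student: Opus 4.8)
The plan is to read off the corollary as a direct specialization of Proposition \ref{propo1} to the arithmetic module $M^* = \Cl_K^*$ sitting inside $M = \Cl_K$, both viewed as $\Z_p[G]$-modules. First I would observe that the hypothesis of the proposition costs nothing here: since $\Cl_K^* = {\rm Ker}(\Nu)$ by definition, the relation $M^{*\nu}=1$ holds automatically. Proposition \ref{propo1} then gives at once ${\rm rk}_p(\Cl_K^*) \leq (p-1)\,{\rm rk}_p(M^*_1)$, so the only remaining work is to identify the first filtration step $M^*_1$ with $\Cl_K^{G*}$ and to compute its $p$-rank.

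To identify $M^*_1$, recall from the filtration \eqref{filtre} (with $M^*_0 = 1$) that $M^*_1 = (M^*)^G$, the $G$-fixed submodule of $\Cl_K^*$. By the compatibility $(M^*)_h = (M_h)^*$ recorded in the footnote, taken at $h = 1$, this coincides with $(M_1)^* = (\Cl_K^G)^*$; equivalently, both sides are the intersection $\Cl_K^G \cap {\rm Ker}(\Nu)$, which is how I would justify the identity in one line if I did not want to invoke the footnote. This yields ${\rm rk}_p(M^*_1) = {\rm rk}_p(\Cl_K^{G*})$ and hence the stated inequality.

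For the concluding equality ${\rm rk}_p(\Cl_K^{G*}) = {\rm rk}_p(\Cl_K^G)$, I would spell out the action of $\Nu$ on $\Cl_K^G$: there $\sigma$ acts as the identity, so $\Nu = 1 + \sigma + \cdots + \sigma^{p-1}$ restricts to the $p$-th power map, whence $\Cl_K^{G*} = {\rm Ker}(\Nu|_{\Cl_K^G}) = \Cl_K^G[p]$, the $p$-torsion subgroup. For any finite abelian $p$-group $A$ the $\F_p$-dimension of $A[p]$ equals the number of cyclic invariant factors, i.e. ${\rm rk}_p(A)$; applying this to the finite $p$-group $A = \Cl_K^G$ gives the equality.

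I do not expect a genuine obstacle: the corollary is essentially Proposition \ref{propo1} transported to $M = \Cl_K$. The only steps requiring attention are the bookkeeping identification $M^*_1 = (\Cl_K^G)^*$ --- immediate from the footnote or from the one-line intersection argument above --- and the elementary fact that passing to the $p$-torsion does not alter the $p$-rank of a finite abelian $p$-group, which secures the final equality ${\rm rk}_p(\Cl_K^{G*}) = {\rm rk}_p(\Cl_K^G)$.
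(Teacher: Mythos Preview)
Your proof is correct and follows exactly the paper's approach: the corollary is stated right after Proposition \ref{propo1} with the single line ``Considering $M=\Cl_K$ and $M^*$ in $K/k$ yields:'', and you have simply spelled out the details of that specialization (the identification $M^*_1 = \Cl_K^{G*}$ via the footnote, and the observation that $\Nu$ restricts to the $p$-th power map on $\Cl_K^G$ so that $\Cl_K^{G*} = \Cl_K^G[p]$ has the same $p$-rank as $\Cl_K^G$). There is nothing to add or correct.
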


\subsection{Majoration of ${\rm rk}_p(\Cl_K^G)$ and ${\rm rk}_p(\Cl_K)$}

\begin{proposition} \label{propo2}
We have ${\rm rk}_p(\Cl_K^G) \leq {\rm rk}_p(\Cl_k) + t_k + {\rm rk}_p ( E_k/E_k^p)$, 
where $t_k$ is the number of prime ideals of $k$ ramified in $K/k$ and
where $E_k$ is the group of units of $k$.
\end{proposition}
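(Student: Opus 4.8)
The plan is to run the classical ambiguous-class analysis, keeping track only of $p$-ranks. I would start from the tautological sequence $1 \to P_K \to I_K \to \Cl_K \to 1$, where $I_K$ is the group of fractional ideals of $K$ and $P_K$ the principal ones, and take $G$-cohomology. Since $I_K$ is a permutation $\Z_p[G]$-module ($G$ permutes the primes of $K$) and ${\rm H}^1$ of a finite group with trivial $\Z$-coefficients is trivial, one gets ${\rm H}^1(G, I_K)=1$, whence the exact sequence
$$1 \to P_K^G \too I_K^G \too \Cl_K^G \mathop{\too}^{\delta} {\rm H}^1(G,P_K) \to 1.$$
Writing $\mathcal A := {\rm im}(I_K^G \to \Cl_K^G)$ for the subgroup of ambiguous ideal classes, exactness gives $\ker \delta = \mathcal A$ and hence $\Cl_K^G / \mathcal A \simeq {\rm H}^1(G,P_K)$; so it suffices to bound the $p$-ranks of $\mathcal A$ and of ${\rm H}^1(G,P_K)$ separately and add them.

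For $\mathcal A$, the ambiguous ideals $I_K^G$ are generated by the extended ideals ${\rm J}(I_k)$ together with the $t_k$ ramified primes $\mathfrak P_1,\dots,\mathfrak P_{t_k}$: since $p$ is prime, each ramified $\mathfrak p$ of $k$ is totally ramified, $\mathfrak p\mathcal O_K=\mathfrak P^p$, so $\mathfrak P$ is $G$-fixed with $\mathfrak P^p={\rm J}(\mathfrak p)$. Passing to classes, ${\rm J}(I_k)$ maps onto ${\rm J}(\Cl_k)$, whose $p$-rank is $\leq {\rm rk}_p(\Cl_k)$, while the $[\mathfrak P_i]$ contribute at most $t_k$ further generators; hence ${\rm rk}_p(\mathcal A) \leq {\rm rk}_p(\Cl_k) + t_k$.

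For ${\rm H}^1(G,P_K)$, I would use $1\to E_K \to K^\times \to P_K \to 1$ together with Hilbert 90 (${\rm H}^1(G,K^\times)=1$) to obtain an injection ${\rm H}^1(G,P_K) \hookrightarrow {\rm H}^2(G,E_K)$. As $G$ is cyclic, ${\rm H}^2(G,E_K)\simeq \wh{\rm H}^0(G,E_K)=E_k/{\rm N}(E_K)$, and since ${\rm N}(\varepsilon)=\varepsilon^{\,p}$ for $\varepsilon \in E_k$ one has $E_k^p \subseteq {\rm N}(E_K)$; thus $E_k/{\rm N}(E_K)$ is a quotient of $E_k/E_k^p$ and ${\rm rk}_p({\rm H}^1(G,P_K)) \leq {\rm rk}_p(E_k/E_k^p)$. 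Combining the two bounds yields ${\rm rk}_p(\Cl_K^G) \leq {\rm rk}_p(\mathcal A) + {\rm rk}_p({\rm H}^1(G,P_K)) \leq {\rm rk}_p(\Cl_k)+t_k+{\rm rk}_p(E_k/E_k^p)$, as claimed.

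I expect the only genuinely delicate points to be the bookkeeping identifying $\ker\delta$ with the ambiguous ideal classes and confirming the permutation-module vanishing ${\rm H}^1(G,I_K)=1$; the unit contribution via $\wh{\rm H}^0$ and the description of $I_K^G$ modulo extended ideals are then routine genus-theoretic facts.
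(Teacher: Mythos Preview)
Your proof is correct and is the cohomological packaging of the paper's own argument: the paper writes down directly the exact sequence $1 \to \cl_K(I_K^G) \to \Cl_K^G \xrightarrow{\theta} E_k \cap {\rm N}_{K/k}(K^\times)/{\rm N}_{K/k}(E_K) \to 1$, where their $\theta$ is your connecting map $\delta$ and your ${\rm H}^1(G,P_K)$ is identified with $E_k \cap {\rm N}(K^\times)/{\rm N}(E_K)$ via the Hilbert~90 step you carry out, and then bounds the two pieces exactly as you do. (The slip calling $I_K$ a $\Z_p[G]$-module rather than a $\Z[G]$-permutation module is harmless for the vanishing of ${\rm H}^1$.)
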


\proof 
We have the classical exact sequence:
\begin{equation}\label{clg}
1 \to \cl_K(I_K^G) \too \Cl_K^G  \mathop{\too}^{\theta}
E_k \cap {\rm N}_{K/k} (K^\times)/ {\rm N}_{K/k}(E_K) \to 1, 
\end{equation}
where $I_K$ is the $\Z[G]$-module of ideals of $K$ and
where $\theta$ associates with $\cl_K({\mathfrak A})$, 
such that ${\mathfrak A}^{1-\sigma}=(\alpha)$, $\alpha \in K^\times$, the 
class of the unit ${\rm N}_{K/k}(\alpha)$ of $k$, modulo ${\rm N}_{K/k}(E_K)$. 
The surjectivity and the kernel are immediate.

\smallskip
The $\Z[G]$-module $I_K^G$ is generated by ${\rm J}(I_k)$,
extension in $K$ of the ideals of $k$, and 
by the $t_k$ prime ideals of $K$ ramified in $K/k$.
Thus, using the obvious inequality ${\rm rk}_p 
(E_k \cap {\rm N}_{K/k} (K^\times)/ {\rm N}_{K/k}(E_K)) 
\leq {\rm rk}_p (E_k/E_k^p)$, we obtain:

\smallskip
\centerline{${\rm rk}_p (\Cl_K^G) \leq {\rm rk}_p (\cl_K(I_K^G)) +
{\rm rk}_p (E_k \cap {\rm N}_{K/k} (K^\times)/ {\rm N}_{K/k}(E_K))$}

\medskip
\centerline{\hspace{-1.8cm}$\leq {\rm rk}_p (\Cl_k) + t_k + {\rm rk}_p (E_k /E_k^p)$,}

\smallskip\noindent
which proves the claim.
\qed \medskip

\begin{remark} {\rm The Chevalley formula \cite{Ch} gives, in $K/k$, the order:
$$\order( \Cl_K^G) = \order \Cl_k \cdot 
\frac{p^{t_k-1}}{(E_k : E_k \cap {\rm N}_{K/k} (K^\times))}; $$

one sees some similarities between this formula and the inequality
given by Proposition \ref{propo2},
but we can not deduce a more efficient inequality on the ranks
because we only have
${\rm rk}_p (\Cl_K^G) \leq \ds\frac{{\rm log}(\order \Cl_k)}{{\rm log}(p)} + t_k - 1$,
and the order of $\Cl_k$ may be huge even if its $p$-rank may be evaluated.

\smallskip
The Chevalley formula gives a more precise inequality when $\order \Cl_k$ is small
(e.g., $\Cl_k=1$, giving ${\rm rk}_p (\Cl_K^G) \leq  t_k - 1$); on the contrary, our 
inductive method controls more the $p$-ranks than the orders.}
\end{remark}

Denote by $\{\ell_1, \ldots , \ell_N\}$, $N=N_{i-1} \geq 0$, the set of tame
prime numbers ramified in $K/k = F_i/F_{i-1}$ (for such a $\ell$,
there exist prime ideals ${\mathfrak l}_{u} \mid \ell$ in $k$,
$u \in [1, t_{k,\ell}]$, $t_{k,\ell} \geq 1$, ramified in $K/k$ and
similarly for $p$). Thus, with the above notations:

\smallskip
\centerline{$t_k = t_{k,p} + \sm_{j=1}^{N} t_{k,\ell_j}^{}$.}

\smallskip
We shall replace, in Proposition \ref{propo2},  
${\rm rk}_p ( E_k/E_k^p)$, then $t_{k,p}$ and $t_{k,\ell_j}^{}$, by the 
rough upper bound $d_k := [k : \Q] = p^{i-1} \, [\Kappa : \Q]$, which gives, using 
inequality (\ref{inequality}), Corollary \ref{coro0} and Proposition \ref{propo2}:

\begin{proposition} \label{propo3} Put $d_k = [k : \Q]$ and let
$N$ be the number of {\it tame} prime numbers ramified in $K/k$ (cyclic of degree $p$); 
we have the inequalities:

\medskip
\centerline{\hspace{0.55cm}${\rm rk}_p(\Cl_K^*) \leq (p-1) \cdot \big [\, {\rm rk}_p(\Cl_k) + 
(N+2)\,d_k \, \big ]$,}

\medskip
\centerline{\hspace{0.5cm}${\rm rk}_p(\Cl_K) \leq p \cdot {\rm rk}_p(\Cl_k) + 
(p-1)\, (N+2)\,d_k$.}
\end{proposition}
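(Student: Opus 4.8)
The plan is simply to insert the uniform bound $d_k = [k:\Q]$ for each of the three quantities appearing on the right of Proposition \ref{propo2}, and then to chain the result through Corollary \ref{coro0} and the elementary $p$-rank inequality \eqref{inequality}. Thus the proof is an assembly of already-established estimates rather than anything new, and the two displayed inequalities will fall out by direct substitution.

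First I would dispose of the unit term. By Dirichlet's theorem $E_k \simeq \mu_k \times \Z^{r_1+r_2-1}$, where $r_1,r_2$ count the real and complex places of $k$ and $\mu_k$ is the group of roots of unity; hence ${\rm rk}_p(E_k/E_k^p) = (r_1+r_2-1)+\delta_p$, with $\delta_p=1$ when $\mu_p \subset k$ and $\delta_p=0$ otherwise. Since $r_1+2r_2 = d_k$ forces $r_1+r_2 \leq d_k$, this gives ${\rm rk}_p(E_k/E_k^p) \leq r_1+r_2 \leq d_k$. Next I would bound the ramification count $t_k = t_{k,p} + \sm_{j=1}^{N} t_{k,\ell_j}^{}$: since the number of primes of $k$ lying above any single rational prime is at most $[k:\Q]=d_k$, we have $t_{k,p} \leq d_k$ and each $t_{k,\ell_j}^{} \leq d_k$, whence $t_k \leq (N+1)\,d_k$. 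Feeding both estimates into Proposition \ref{propo2} yields ${\rm rk}_p(\Cl_K^G) \leq {\rm rk}_p(\Cl_k) + (N+1)\,d_k + d_k = {\rm rk}_p(\Cl_k) + (N+2)\,d_k$.

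To finish, Corollary \ref{coro0} gives ${\rm rk}_p(\Cl_K^*) \leq (p-1)\,{\rm rk}_p(\Cl_K^G)$, which combined with the previous line is exactly the first inequality ${\rm rk}_p(\Cl_K^*) \leq (p-1)\big[{\rm rk}_p(\Cl_k)+(N+2)\,d_k\big]$; substituting this into \eqref{inequality} produces ${\rm rk}_p(\Cl_K) \leq {\rm rk}_p(\Cl_k) + (p-1)\big[{\rm rk}_p(\Cl_k)+(N+2)\,d_k\big] = p\,{\rm rk}_p(\Cl_k) + (p-1)(N+2)\,d_k$, the second inequality. There is no genuine obstacle here; the only point requiring a little care is that the rough bound $d_k$ must still dominate ${\rm rk}_p(E_k/E_k^p)$ in the case $\mu_p \subset k$, where the extra $+1$ in $\delta_p$ is precisely compensated by the $-1$ in the free unit rank $r_1+r_2-1$, leaving the clean estimate ${\rm rk}_p(E_k/E_k^p) \leq r_1+r_2 \leq d_k$.
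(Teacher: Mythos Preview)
Your proof is correct and follows exactly the route the paper takes: replace ${\rm rk}_p(E_k/E_k^p)$, $t_{k,p}$ and each $t_{k,\ell_j}$ by the uniform bound $d_k$, feed this into Proposition~\ref{propo2}, then chain through Corollary~\ref{coro0} and inequality~\eqref{inequality}. The only difference is that you spell out the Dirichlet unit theorem justification for ${\rm rk}_p(E_k/E_k^p)\leq d_k$, whereas the paper simply asserts this as the ``rough upper bound''.
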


\section{About the discriminants in the tower $(F_i)_{0 \leq i \leq e}$}
Now, the goal is to give lower bounds for discriminants, unlike for the 
case of $p$-ranks. Give some essential explanations:

\begin{remark}\label{rema}{\rm
We have given, in the previous section, an upper bound for the 
$p$-rank of $\Cl_K$ in $K/k=F_i/F_{i-1}$, where the number of ramified primes is 
crucial because of ``genus theory'' aspects, so that {\it any true $p$-rank} 
at the step $K/k$ (whatever $F \in {\mathscr F}_{\!\!\kappa}^{p^e}$) 
will be {\it smaller}\,; note that each integer $N=N_{i-1}$ is relative to 
the step $K/k=F_i/F_{i-1}$, which will be also the case of the relative 
discriminants $D_{K/k}$.

\smallskip
In the present section, we shall give a definition of the 
``minimal tame relative discriminant'' $D_N^{\rm ta} \in \N$ 
only depending on $N$, then a minor modification giving
$D_N \geq D_N^{\rm ta}$ (so that any {\it true} relative 
discriminant $D_{K/k} = D_{K/k}^{\rm ta} \times \hbox{($p$-power)} \in \N$ 
(from ramification of tame $\ell_j$ and possibly that of $p$), will be 
{\it larger} than $D_N$). Then we shall apply the relation
$D_K = D_k^p\, D_{K/k} \geq D_k^p\, D_N$.

\smallskip
Thus, under this facts, if an ``$\varepsilon$-inequality'' 
$\order (\Cl_K \otimes \F_p) \ll (\sqrt{D_k^p\, D_N}\,)^\varepsilon$
does exist between such fictitious $p$-ranks and discriminants, a fortiori, any 
effective situation will fulfill the $p$-rank $\varepsilon$-inequality at this step.}
\end{remark}

\subsection{Tame relative discriminant -- Minimal relative discriminant}

As above, let $k=F_{i-1}$, $K=F_i$, $N=N_{i-1}$
and let $D^{\rm ta}_k$ and $D^{\rm ta}_K$ be the absolute values 
of {\it the tame parts} of the discriminants of $k$ and $K$, respectively.
For the  $N$ tame prime numbers $\ell$, ramified in $K/k$, let
${\mathfrak l}_1, \ldots, {\mathfrak l}_{t_{k,\ell}}$ be the 
prime ideals of $k$ above $\ell$, ramified in $K/k$.

\smallskip
The relative norm of the different of $K/k$ gives the tame part 
of the relative ideal discriminant of $K/k$, namely 
${\mathcal D}^{\rm ta}_{K/k} = \prd_{j=1}^N  
\prd_{u=1}^{t_{k,\ell_j}} {\mathfrak l}_{j,u}^{p-1}$, and its absolute 
norm is $D_{K/k}^{\rm ta}$ (tame relative discriminant); the tame 
``discriminant formula'' \cite[Propositions IV.4 and III.8]{Se} yields 
($f_{k,{\mathfrak l}_{j,u}}$ is the residue degree of ${\mathfrak l}_{j,u}$ in $k/\Q$):
\begin{equation*}
D^{\rm ta}_K = (D^{\rm ta}_k)^p \cdot {\rm N}_{k/\Q}({\mathcal D}^{\rm ta}_{K/k})= 
(D^{\rm ta}_k)^p \cdot D^{\rm ta}_{K/k} \leq D_k^p \cdot D^{\rm ta}_{K/k},
\end{equation*}
where $D^{\rm ta}_{K/k} = \prd_{j=1}^N  \prd_{u=1}^{t_{k,\ell_j}} 
\ell_j^{(p-1)\, f_{k,{\mathfrak l}_{j,u}}} = \prd_{j=1}^N \,\ell_j^{(p-1) \cdot 
\sum_{u=1}^{t_{k,\ell_j}} f_{k,{\mathfrak l}_{j,u}}}$.

\medskip
We intend now to determine a lower 
bound $D_N$ of $D_{K/k}$, only depending on $N$,
so that for every concrete ramification in $K/k$, with $N$ tame 
primes $\ell_j$ and possibly that of $p$, the effective relative 
discriminant $D_{K/k}$ will be necessarily larger 
than $D_N$ as explained in Remark \ref{rema}. 
This needs two obvious lemmas.

\begin{lemma}\label{lem1}
A lower bound of the tame relative discriminant $D^{\rm ta}_{K/k}$ is
obtained when $t_{k,\ell_j} = f_{k,{\mathfrak l}_{j}} = 1$ for all $j=1,\ldots,N$; 
thus the ``fictive'' minimum of $D^{\rm ta}_{K/k}$ is then 
$D^{\rm ta}_N := \prod_{j=1}^N q_j{}^{p-1}$, 
taking the $N$ successive prime numbers $q_j\ne p$.
\end{lemma}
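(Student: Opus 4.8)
The plan is to argue directly from the closed-form expression
$$D^{\rm ta}_{K/k} = \prd_{j=1}^N \ell_j^{(p-1)\cdot \sum_{u=1}^{t_{k,\ell_j}} f_{k,{\mathfrak l}_{j,u}}}$$
established just above, treating the ramified tame primes $\ell_j$, the ramification multiplicities $t_{k,\ell_j}$, and the residue degrees $f_{k,{\mathfrak l}_{j,u}}$ as the free parameters over which one minimizes. The strategy is first to minimize each factor for a fixed choice of primes, then to minimize over the primes themselves.

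First I would bound each exponent from below. For every $j$ one has $t_{k,\ell_j}\geq 1$ (at least one prime ideal above $\ell_j$ must ramify for $\ell_j$ to be counted) and $f_{k,{\mathfrak l}_{j,u}}\geq 1$ for each $u$, so the inner sum $\sum_{u=1}^{t_{k,\ell_j}} f_{k,{\mathfrak l}_{j,u}}$ is at least $1$, with equality exactly when $t_{k,\ell_j}=1$ and the single ramified prime ${\mathfrak l}_j$ has residue degree $f_{k,{\mathfrak l}_j}=1$. Hence each factor satisfies $\ell_j^{(p-1)\cdot \sum_u f}\geq \ell_j^{p-1}$, giving $D^{\rm ta}_{K/k}\geq \prd_{j=1}^N \ell_j^{p-1}$. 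This fictive configuration --- a single ramified prime of residue degree one above each $\ell_j$ --- is precisely the one named in the statement, and it realizes the lower bound for any fixed choice of the $\ell_j$.

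It then remains to minimize $\prd_{j=1}^N \ell_j^{p-1}$ over all admissible choices of the $N$ ramified tame primes, that is, over all sets of $N$ \emph{distinct} primes $\ell_j\neq p$. Since the map $\ell\mapsto \ell^{p-1}$ is strictly increasing on the positive integers and the $\ell_j$ are pairwise distinct and each different from $p$, the product is smallest exactly when the $\ell_j$ are the $N$ smallest primes avoiding $p$, namely the successive primes $q_1<\cdots<q_N$ with $q_j\neq p$. This yields the asserted minimum $D^{\rm ta}_N = \prd_{j=1}^N q_j^{p-1}$.

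There is no genuine obstacle here; the lemma is elementary, as the statement already signals by calling it ``fictive''. The only point deserving a word is the final monotonicity step: minimizing a product of the strictly increasing quantities $\ell_j^{p-1}$ over distinct admissible primes forces the greedy choice of the smallest available primes, which follows at once from the observation that replacing any $\ell_j$ by a smaller unused prime $\neq p$ strictly decreases the product. I would record this by a one-line rearrangement remark rather than a formal induction.
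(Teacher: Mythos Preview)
Your argument is correct and is exactly the obvious verification the paper has in mind: the paper states this lemma without proof (calling it one of ``two obvious lemmas''), relying on the displayed formula $D^{\rm ta}_{K/k} = \prod_{j=1}^N \ell_j^{(p-1)\cdot \sum_u f_{k,{\mathfrak l}_{j,u}}}$ together with the trivial bounds $t_{k,\ell_j}\geq 1$, $f_{k,{\mathfrak l}_{j,u}}\geq 1$ and the greedy choice of the smallest primes $\neq p$. There is nothing to add.
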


We may call $D^{\rm ta}_N$ the {\it minimal 
tame relative discriminant} (for instance, for $p=2$, $D^{\rm ta}_N = 
3 \cdot 5 \cdot 7 \cdots q_N$); whence, an analogous 
framework as for the case of degree $p$ 
cyclic number fields given in \cite{Gr00}.
Note that, when $\Kappa \ne \Q$ the primes $\ell$, ramified in $K/k$,
are not necessarily such that $\ell \equiv 1$ (mod $p$) (e.g., $p=3$,  
$\Kappa = \Q(\mu_3)$ and $F= \Kappa(\sqrt[3^e\!]{2})$).

\medskip
The second lemma will simplify the forthcoming computations:

\begin{lemma}\label{lem11}
One can replace $D^{\rm ta}_N = \prod_{j=1,\,q_j \ne p}^N q_j{}^{p-1}$ by
$D_N = \prod_{j=1}^N q_j{}^{p-1}$.
\end{lemma}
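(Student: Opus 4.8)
The plan is to check that the substitution only weakens the lower bound, hence is harmless: I would show $D_N \leq D^{\rm ta}_N$, so that the chain already produced by Lemma \ref{lem1} survives with $D_N$ in place of $D^{\rm ta}_N$, and every subsequent $\varepsilon$-estimate (which only uses that the fictive discriminant is a \emph{lower} bound for the true $D_{K/k}$) goes through unchanged.

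First I would recall what Lemma \ref{lem1} already gives. For any concrete step $K/k = F_i/F_{i-1}$ with exactly $N$ tame ramified prime numbers, the tame relative discriminant satisfies $D^{\rm ta}_{K/k} \geq D^{\rm ta}_N$, the minimum being the ``fictive'' configuration $t_{k,\ell_j} = f_{k,{\mathfrak l}_j} = 1$ with the smallest admissible primes. Since $D_{K/k} = D^{\rm ta}_{K/k} \cdot \hbox{($p$-power)} \geq D^{\rm ta}_{K/k}$, the $p$-contribution being a non-negative power of $p$, this already yields $D_{K/k} \geq D^{\rm ta}_N$.

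Next I would compare $D_N$ and $D^{\rm ta}_N$ by a termwise inspection of the two lists of primes. Let $q_1 < q_2 < \cdots$ be the sequence of all primes and $q'_1 < q'_2 < \cdots$ the subsequence of those distinct from $p$. Deleting the single prime $p$ either leaves the list unchanged (when $p$ exceeds the $N$-th prime) or shifts every index from the position of $p$ onwards to the next, strictly larger, prime; in either case $q_j \leq q'_j$ for all $j$. Raising to the fixed exponent $p-1 \geq 1$ and multiplying over $1 \leq j \leq N$ gives $D_N = \prod_{j=1}^N q_j^{\,p-1} \leq \prod_{j=1}^N (q'_j)^{\,p-1} = D^{\rm ta}_N$.

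Combining the two steps, $D_N \leq D^{\rm ta}_N \leq D_{K/k}$ at every step of the tower, so $D_N$ is a legitimate, merely coarser, lower bound; equivalently $D_k^p\, D_N \leq D_K$, which is all the forthcoming argument needs. The payoff of the replacement is purely bookkeeping: $D_N$ is the product of the honest first $N$ primes, which one can feed directly into the Montgomery--Vaughan / prime-counting asymptotics without having to carry the exclusion of $p$. There is essentially no obstacle — the lemma is ``obvious'' as stated — and the only point that deserves care is to confirm that the inequality runs in the safe direction, namely that passing to $D_N$ can only shrink the bound and hence cannot break a lower-bound argument.
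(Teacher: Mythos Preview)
Your proof is correct. You establish $D_N \leq D^{\rm ta}_N$ by the termwise shift inequality $q_j \leq q'_j$, then chain this with Lemma~\ref{lem1} and the trivial bound $D^{\rm ta}_{K/k} \leq D_{K/k}$ to obtain $D_N \leq D_{K/k}$.

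The paper argues differently: it compares $D_{K/k}$ with $D_N$ directly via a case split on whether $p$ ramifies in $K/k$. If $p \nmid D_{K/k}$, the $N$ tame primes of $D_{K/k}$ all avoid $p$, so at least one of them exceeds $q_N$, giving $D_{K/k} > D_N$; if $p \mid D_{K/k}$, the wild $p$-contribution to $D_{K/k}$ dominates the factor $p^{p-1}$ appearing in $D_N$. Your route is arguably cleaner: it avoids the case analysis and any discussion of the size of the wild part, simply by passing through $D^{\rm ta}_N$, which Lemma~\ref{lem1} has already bounded below $D^{\rm ta}_{K/k}$. The paper's argument, on the other hand, is more self-contained at this spot and makes explicit why inserting the prime $p$ into the product is harmless in each ramification scenario. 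Both reach the same conclusion $D_N \leq D_{K/k}$, which is all that the induction in \S3.2 requires.
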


\proof 
The claim is obvious if $p>q_N$. Otherwise, there are 
two cases for a true relative discriminant $D_{K/k}$:

\smallskip
(i) $p \mid D_{K/k}$. In this case the $p$-part of $D_{K/k}$ is a $p$-power 
larger than $p$ and the required inequality between $D_{K/k}$ and $D_N$ holds;

\smallskip
(ii) $p \nmid D_{K/k}$. In this case, since $p \mid D_N$ and since $D_{K/k}$ has $N$ 
tame ramified primes, there exists at least an $\ell \mid D_{K/k}$ (``replacing'' $p$), 
$\ell \nmid D_N$; so this prime is larger than $q_N \geq p$, thus $D_{K/k}>D_N$.
\qed

\subsection{Induction}
Let $F \in {\mathscr F}_{\!\!\kappa}^{p^e}$. The case $i=0$ ($k=\Kappa$) 
is obvious since to get, for all $\varepsilon > 0$:
$$\order (\Cl_\kappa \otimes \F_p) = p^{{\rm rk}_p(\Cl_\kappa)} 
\leq C_{\kappa, p, \varepsilon} 
\cdot (\sqrt {D_\kappa}\,)^\varepsilon, $$ 
it suffices to take $C_{\kappa, p, \varepsilon} = 
C_{\kappa, p} := p^{{\rm rk}_p(\Cl_\kappa)}$ since 
$(\sqrt {D_\kappa}\,)^\varepsilon > 1$; in other words one may replace 
${\mathscr F}_{\!\!\kappa}^{p^e}$ by the family${\mathscr F}_{\!\!d,\rho}^{p^e}$
where $\Kappa$, of degree $d_\kappa \leq d$, varies under the condition 
${\rm rk}_p(\Cl_\kappa) \leq \rho$, $d$, $\rho$ given (e.g., one may consider all the 
$p$-principal base fields $\Kappa$ of degree $d_\kappa \leq d$ with $\rho=0$).

\smallskip
By induction, we assume (for $k:=F_{i-1}$, $K=F_i$) that for all $\varepsilon >0$ 
there exists a constant $C_{k, p, \varepsilon}$ such that
$p^{{\rm rk}_p(\Cl_k)} \leq C_{k, p, \varepsilon} \cdot (\sqrt {D_k}\,)^\varepsilon$
independently of the number $N$ of tame ramified primes $\ell_j$
in $K/k$; then we shall prove the property for $K$.

\smallskip
Proposition \ref{propo3} implies the following inequality (where $d_k :=[k : \Q]$):
\begin{equation}\label{eq}
 p^{{\rm rk}_p(\Cl_K)} \leq p^{p\, \cdot \, {\rm rk}_p(\Cl_k) + 
(p-1)\,(N+2)\,d_k}  
\leq C_{k, p, \varepsilon}^p\! \cdot \hbox{$\big( \sqrt {D_k^p}\,\big)^{\varepsilon}$} \!\!
\cdot p^{(p-1)\,(N+2)\,d_k}.
\end{equation}

\noindent
Then, as explained in Remark \ref{rema}, we will compare $p^{(p-1) \cdot (N+2)\,d_k}$ 
and $\big(\sqrt{D_N}\,\big)^{\varepsilon}$, where (Lemmas \ref{lem1}, \ref{lem11})
$D_N=\prd_{j=1}^N q_j{}^{p-1}$, the $q_j$ being the $N$ consecutive prime 
numbers whatever $p$, then using the inequality $D_K \geq D_k^p D_N$.

\smallskip
But we have the required computations in \cite[\S\,2.3, Formulas (4, 5), p.\,10]{Gr00} 
that we improve with some obvious modifications. 

\begin{proposition} \label{c}
There exists $c_{k, p, \varepsilon}$ such that
$p^{(p-1) (N+2)\,d_k} \leq 
c_{k, p, \varepsilon} \cdot \big(\sqrt{D_N}\,\big)^{\varepsilon}$.
\end{proposition}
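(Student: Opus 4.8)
The plan is to take logarithms and reduce the claimed inequality to an asymptotic comparison between a linear function of $N$ and a Chebyshev-type sum. Writing $D_N = \prod_{j=1}^N q_j^{\,p-1}$ (Lemmas \ref{lem1}, \ref{lem11}), we have $\log\big((\sqrt{D_N}\,)^\varepsilon\big) = \frac{\varepsilon(p-1)}{2}\sum_{j=1}^N \log q_j$, so after passing to logarithms and dividing by $p-1$ it suffices to produce a constant $c_{k,p,\varepsilon}$ with
$$(N+2)\,d_k\,\log p \;\leq\; \tfrac{1}{p-1}\log c_{k,p,\varepsilon} + \tfrac{\varepsilon}{2}\,\theta(q_N), \qquad \theta(q_N) := \sum\nolimits_{j=1}^N \log q_j,$$
for all $N \geq 0$, the $q_j$ being the first $N$ prime numbers.

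First I would record a lower bound for $\theta(q_N)$. Since $\theta(q_N) = \log\big(\prod_{j=1}^N q_j\big)$ is the logarithm of the $N$th primorial and $q_N$ is the $N$th prime, the Montgomery--Vaughan estimates on prime numbers (equivalently the effective prime number theorem, in the Rosser--Schoenfeld form) give $q_N \gg N\log N$ and hence $\theta(q_N) \gg N\log N$; concretely there are an explicit constant $c_0 > 0$ and an explicit threshold $N_1$ with $\theta(q_N) \geq c_0\, N \log N$ for all $N \geq N_1$. This is the only point at which analytic information on primes enters, and it is exactly the input used for the family ${\mathscr F}_{\!\Q}^{p}$ in \cite[\S\,2.3]{Gr00}; the present situation is in fact easier, since Lemmas \ref{lem1} and \ref{lem11} have removed any congruence condition on the ramified primes, so one may use the first $N$ primes outright rather than primes in a fixed residue class.

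With this bound in hand the comparison is immediate. For $N \geq N_1$ the right-hand side of the displayed inequality is at least $\frac{\varepsilon}{2} c_0\, N \log N$, which dominates the left-hand side $(N+2)\,d_k\,\log p$ as soon as $\frac{\varepsilon}{2} c_0 \log N > d_k \log p \cdot \frac{N+2}{N}$; since $\log N \to \infty$ this holds for every $N \geq N_2$, an explicit threshold depending only on $d_k$, $p$, $\varepsilon$, and there one may take $c_{k,p,\varepsilon}=1$. For the finitely many values $0 \leq N < N_2$ the quantity $(N+2)\,d_k \log p$ is bounded while $(\sqrt{D_N}\,)^\varepsilon \geq 1$, so it suffices to enlarge $c_{k,p,\varepsilon}$ to the finite maximum of $p^{(p-1)(N+2)d_k}$ taken over $N < N_2$. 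Defining $c_{k,p,\varepsilon}$ as the larger of that maximum and $1$ settles all $N$ and finishes the proof.

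The main obstacle is concentrated in the first step: extracting an effective lower bound $\theta(q_N) \geq c_0\,N\log N$ with explicit $c_0$ and $N_1$, so that the resulting $c_{k,p,\varepsilon}$ is genuinely computable (as claimed for the main theorems). Everything afterwards is a routine superlinear-versus-linear comparison, and the dependence of $c_{k,p,\varepsilon}$ on $k$ reduces to its dependence on $d_k=[k:\Q]$, in accordance with the reduction to ${\mathscr F}_{\!\!d,\rho}^{p^e}$ carried out in the induction.
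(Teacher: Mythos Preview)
Your proposal is correct and follows essentially the same route as the paper: take logarithms, reduce to showing that the linear term $(N+2)\,d_k\log p$ is dominated by $\tfrac{\varepsilon}{2}\,\theta(q_N)$, invoke an effective prime-number bound to get $\theta(q_N)\gg N\log N$, and absorb finitely many small $N$ into the constant. The only cosmetic difference is that the paper makes the analytic step more explicit---it uses the Montgomery--Vaughan lower bound $q_j \geq \tfrac{1}{2}\,j\,\log(q_j/2)$ on individual primes, replaces $\sum_j \log q_j$ by $\sum_j \log j$ (with a numerical check), and then applies Stirling to recover the dominant $-\varepsilon\,\tfrac{p-1}{2}\,N\log N$ term---whereas you cite the resulting bound $\theta(q_N)\geq c_0\,N\log N$ directly; both lead to the same computable constant.
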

   
\proof
For $N=0$, $D_N=1$ ($K/k$ is at most $p$-ramified), so that the result 
(independent of $\varepsilon $) is true since the constant $c_{k, p, \varepsilon}$,
resulting of the computation of a maximum taken over $N \in \N$ 
(see Remark \ref{max}), will be much larger than $p^{2\, (p-1) \, d_k}$
as we shall verify; we assume $N \geq 1$ in what follows. 

\smallskip
The existence of $c_{k, p, \varepsilon}$ is equivalent to the fact that
$p^{(p-1) (N+2)\, d_k}  (\sqrt{D_N}\,)^{-\varepsilon}$ is bounded over $N$, 
whence $(p-1) (N+2)\,d_k \,{\rm log}(p) - \varepsilon \cdot 
\hbox{$\frac{p-1}{2}$} \!\! \sm_{j=1}^N {\rm log}(q_j) < \infty$,
in which case, ${\rm log}(c_{k, p, \varepsilon})$ is given by the
upper bound.

\smallskip
The main purpose is to estimate the sum $\sum_{j=1}^N {\rm log}(q_j)$,
knowing that we can replace this sum by any convenient lower bound but
noting that this will increase the constant $c_{k, p, \varepsilon}$.

\smallskip
We replace the consecutive primes $q_j$ (except for $q_1=2$) by the lower bounds:

\medskip
\centerline{$q'_1=2$,  \ \,  $q'_j = \hbox{$\frac{1}{2}$} \, j \,
{\rm log}\big (\hbox{$\frac{q_j}{2}$} \big)$,  $j \geq 2$}

\medskip\noindent
(cf. \cite[Notes on Ch. I, \S\,4.6]{T} about the Montgomery--Vaughan result giving, 
for all prime numbers, this lower bound without error term). Thus a sufficient condition
to have $p^{(p-1) (N+2)\, d_k} \cdot (\sqrt{D_N}\,)^{-\varepsilon}$ bounded is that:

\medskip
\leftline{$X(N) :=  (p-1) (N+2)\, d_k\, {\rm log}(p)$}

\rightline{$ - \varepsilon \cdot \hbox{$\frac{p-1}{2}$}
\Big({\rm log}(2) +\sm_{j=2}^N \Big[ {\rm log} 
\big(\hbox{$\frac{1}{2}$}) +  {\rm log} (j)
+ {\rm log}_2 \big (\hbox{$\frac{q_j}{2}$} \big) \Big]\Big) < \infty$.}

We check that $e := {\rm log}(2)+
\sm_{j=2}^N \Big[ {\rm log}(\hbox{$\frac{1}{2}$}) + 
{\rm log}_2 \big(\hbox{$\frac{q_j}{2}$} \big) \Big]$
is positive, except few values of $N$, but that this does not contradict
the whole lower bound, as shown by the following PARI/GP \cite{P}
program computing $E=S-t$ and $e=s-t$, with:

\centerline{$S=\sm_{j=1}^N {\rm log}(q_j)$,
$s={\rm log}(2) +\sm_{j=2}^N \Big[ {\rm log} 
\big(\hbox{$\frac{1}{2}$}) +  {\rm log} (j)
+ {\rm log}_2 \big (\hbox{$\frac{q_j}{2}$} \big) \Big]$,
$t=\sm_{j=2}^N {\rm log}(j)$:}

\smallskip\noindent
thus $E$ measures the 
approximation regarding the lower bound $t$ of $S$.

\footnotesize
\begin{verbatim}
{for(N=1,100,S=log(2);s=log(2);t=0;for(j=2,N,el=prime(j);
S=S+log(el);s=s+log(j/2)+log(log(el/2));t=t+log(j));E=S-t;e=s-t;
print("N=",N," E=S-t=",precision(E,1)," e=s-t=",precision(e,1));
print("S=",precision(S,1)," s=",precision(s,1)," t=",precision(t,1)))}

N=1   E=S-t=0.693147180559945309  e=s-t=0.6931471805599453095
      S=0.693147180559945309  s=0.693147180559945309 t=0
N=2   E=S-t=1.098612288668109691  e=s-t=-0.902720455717879982
      S=1.791759469228055000  s=-0.20957327515793467 t=0.693147180559945309
N=3   E=S-t=1.609437912434100374  e=s-t=-1.683289208068580387
      S=3.401197381662155376  s=0.108470261159474613 t=1.791759469228055000
N=4   E=S-t=2.169053700369523061  e=s-t=-2.151084901802564193
      S=5.347107530717468681  s=1.026968928545381426 t=3.178053830347945620
N=5   E=S-t=2.957511060733793231  e=s-t=-2.310814729030344016
      S=7.745002803515839225  s=2.476677013751701978 t=4.787491742782045994
N=6   E=S-t=3.730700948967274966  e=s-t=-2.377060211850912773
      S=10.30995216097737596  s=4.20219100015918822  t=6.579251212010100996
N=7   E=S-t=4.618004143968177741  e=s-t=-2.309370646333412833
      S=13.143165505033592041 s=6.21579071473200146  t=8.525161361065414300
N=8   E=S-t=5.483001581454782273  e=s-t=-2.191013642714161849
      S=16.087604484200032501 s=8.41358926003108838  t=10.60460290274525022
N=9   E=S-t=6.421271220047712581  e=s-t=-1.9912013465566233694
      S=19.223098700129182192 s=10.8106261335248462  t=12.80182748008146961
N=10  E=S-t=7.485981957040140924  e=s-t=-1.7007174593212892239
      S=22.590394530115656220 s=13.4036951137542260  t=15.10441257307551529
N=11  E=S-t=8.522073888726916626  e=s-t=-1.3856001883918199308
      S=26.024381734600802466 s=16.1167076574820659  t=17.50230784587388584
N=12  E=S-t=9.64808515158314076   e=s-t=-1.0079274921626889861
      S=29.63529964724502690  s=18.9792870034991971  t=19.98721449566188615
N=13  E=S-t=10.7967078608259118   e=s-t=-0.5956771604555961438
      S=33.34887171394933471  s=21.9564866926678267  t=22.55216385312342288
N=14  E=S-t=11.9188506469042156   e=s-t=-0.16778120370448471046
      S=37.11007182964289714  s=25.0234399790341967  t=25.19122118273868150
N=15  E=S-t=13.0609480475120641   e=s-t=0.2886939587134154542
      S=40.96021943135295572  s=28.1879653425543070  t=27.89927138384089156
N=16  E=S-t=14.2586512388244047   e=s-t=0.7825193132031076079
      S=44.93051134490507756  s=31.4543794192837804  t=30.67186010608067280
N=17  E=S-t=15.5029753386739081   e=s-t=1.3085458937854073287
      S=49.00804878881079701  s=34.8136193439222962  t=33.50507345013688888
N=18  E=S-t=16.7234774449510546   e=s-t=1.8443743308869917078
      S=53.11892265298410826  s=38.2398195389200452  t=36.39544520803305358
N=19  E=S-t=17.9837310851755802   e=s-t=2.407283386866249599
      S=57.32361527237507432  s=41.7471675740657436  t=39.33988418719949404
N=20  E=S-t=19.250678688662904708 e=s-t=2.986570896185892937
      S=61.58629514941638974  s=45.3221873569393779  t=42.33561646075348503
N=21  E=S-t=20.496615692087872840 e=s-t=3.573610687966718782
      S=65.87675459056478087  s=48.9537495864436268  t=45.38013889847690803
N=22  E=S-t=21.775021091196578482 e=s-t=4.182370501783588325
      S=70.24620244303180236  s=52.6535518536188122  t=48.47118135183522388
N=23  E=S-t=23.058367483064026714 e=s-t=4.804476310693340535
      S=74.66504305082840029  s=56.4111518784577141  t=51.60667556776437357
N=24  E=S-t=24.368950022448220934 e=s-t=5.445142436276319277
      S=79.15367942056054013  s=60.2298718343886384  t=54.78472939811231919
(...)
N=99  E=S-t=140.388608477587341   e=s-t=72.45660863104452031
      S=499.5228138471627406  s=431.5908140006199191 t=359.1342053695753988
N=100 E=S-t=142.07685757044573    e=s-t=73.48627663602501092
      S=505.8162331260092221  s=437.2256521915885011 t=363.7393755555634902
(...)
N=10000 E=S-t=22283.274179035430378  e=s-t=15748.9389769817289
        S=104392.20201584978383 s=97857.86681379608238  t=82108.9278368143
N=10001 E=S-t=22285.623003678229650  e=s-t=15750.6314792930841
        S=104403.76128085955962 s=97868.76975647441412  t=82118.1382771813
\end{verbatim}

\normalsize
Thus, since $E=S-t$ is always positive, we may consider, instead:

\smallskip
\centerline{$X(N)=(p-1) (N+2)\,d_k \,{\rm log}(p)
 - \varepsilon  \cdot \hbox{$\frac{p-1}{2}$}\,\sm_{j=1}^N  {\rm log} (j)$}
 
\smallskip\noindent
for which it suffices to prove that $X(N)< \infty$.
We have, for all $N \geq 1$, ${\rm log}(N!) = N {\rm log}(N)  - N 
+ \hbox{$\frac{{\rm log}(N)}{2}$}  + 1 - o(1)$, giving:

\medskip
\centerline {$X(N) =(p-1) (N+2)\,d_k\, {\rm log}(p) 
 - \varepsilon \!\cdot\! \hbox{$\frac{p-1}{2}$} \Big[ N {\rm log}(N)
-N  + \hbox{$\frac{{\rm log}(N)}{2}$}  + 1 - o(1) \Big]$,}

\medskip\noindent
whence:
$$X(N)= - \varepsilon \, \hbox{$\frac{p-1}{2}$}\,N\, {\rm log}(N) +
N\, \Big[(p-1)\,d_k \, {\rm log}(p)+ \varepsilon \,\hbox{$\frac{p-1}{2}$} 
- \varepsilon \,\hbox{$\frac{p-1}{2}$} \, o(1) \Big ], $$
where the new $o(1)$ is of the form 
$\frac{{\rm log}(N)}{2 N} +\frac{1-o(1)}{N} > 0$. The dominant term:
$$- \varepsilon \, \hbox{$\frac{p-1}{2}$}\,N\, {\rm log}(N), \ N \geq 1,$$
ensures the existence of a bound $c_{k, p, \varepsilon}$ 
over $N  \geq 1$, $N \to \infty$.\qed

\begin{remark}\label{max}{\rm
To obtain an explicit upper bound of the constant $c_{k, p, \varepsilon}$,
one may replace the function $X(N)$ by $Y(N) > X(N)$ defined by:
$$Y(N):=  - \varepsilon \, \hbox{$\frac{p-1}{2}$}\,N\, {\rm log}(N) +
N\, \Big[(p-1)\,d_k \, {\rm log}(p)+ \varepsilon \,\hbox{$\frac{p-1}{2}$}) \Big ]. $$

One obtains that $Y(N)$ admits, for 
an $N_0 (\varepsilon)> 0$, a maximum given by
$${\rm log}(N_0 (\varepsilon)) = 
2\,d_k\, {\rm log}(p)\cdot \varepsilon^{-1}, $$

which yields a maximum for ${\rm log}(c_{k, p, \varepsilon})$ less than:
$$\hbox{$\frac{p-1}{2} \, \varepsilon \cdot 
e_{}^{2\,d_k \, {\rm log}(p)\,\cdot\, \varepsilon^{-1}}$,}$$
giving an important constant.
This is due in part to the method using certain extreme bounds
which are not achieved in practice (for example the systematic use 
of the upper bound $d_k=[k : \Q]$ to get Proposition \ref{propo3}). }
\end{remark}

Finally, from formula (\ref{eq}) and Proposition \ref{c} giving:

\medskip
\centerline{$p^{(p-1) (N +2)\, d_k} \leq c_{k, p, \varepsilon} \cdot 
(\sqrt{D_N}\,)^{\varepsilon} \leq c_{k, p, \varepsilon} \cdot 
(\sqrt{D_{K/k}}\,)^{\varepsilon}$,}

\medskip\noindent
we may write in $K/k$:

\medskip
\leftline{$p^{{\rm rk}_p(\Cl_K)} \leq C_{k, p, \varepsilon}^p \cdot
\hbox{$\big(\sqrt {D_k^p}\,\big)^{\varepsilon}$}
\cdot \, p^{(p-1) (N +2)\, d_k}$}

\medskip
\centerline{\hspace{1.1cm} $\leq C_{k, p, \varepsilon}^p \cdot
\hbox{$\big(\sqrt {D_k^p}\,\big)^{\varepsilon}$} \cdot c_{k, p, \varepsilon} \cdot
\hbox{$\big(\sqrt {D_{K/k}}\,\big)^{\varepsilon}$} 
=  C_{k, p, \varepsilon}^p \cdot c_{k, p, \varepsilon} \cdot 
\hbox{$\big(\sqrt {D_k^p \cdot D_{K/k}}\,\big)^{\varepsilon}$} $}

\medskip
\leftline{\hspace{1.45cm}$= C_{K, p, \varepsilon} \cdot 
(\sqrt{D_K}\,)^{\varepsilon}, $}

\medskip\noindent
with $C_{K,p,\varepsilon} := C_{k, p, \varepsilon}^p \cdot c_{k, p, \varepsilon}$.
The degree $[F : \Kappa] = p^e$ being fixed, the above induction leads to
(denoting $C_{F,p,\varepsilon} =: C_{\kappa ,p^e,\varepsilon}$):

\begin{theorem}\label{thmf}
Let $p \geq 2$ be prime. The $p$-rank $\varepsilon$-conjecture for the
family ${\mathscr F}_{\!\!\kappa}^{p^e}$ of $p$-cyclic-towers $F/\Kappa$
of degree $p^e$, on the existence, for all $\varepsilon > 0$, of a constant 
$C_{\kappa ,p^e,\varepsilon}$ such that 
$\order (\Cl_F \otimes \F_p) \leq C_{\kappa ,p^e,\varepsilon}
(\sqrt{D_F}\,)^\varepsilon$, is fulfilled unconditionally
for all $F \in {\mathscr F}_{\!\!\kappa}^{p^e}$.
\end{theorem}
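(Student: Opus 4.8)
The plan is to deduce Theorem \ref{thmf} from the single degree $p$ step carried out just above, by a finite induction on the height $i$ of the tower $\Kappa = F_0 \subset \cdots \subset F_e = F$. The base case $i=0$ is the one already settled at the start of the Induction subsection: with $C_{F_0,p,\varepsilon} := C_{\kappa,p,\varepsilon} = p^{\mathrm{rk}_p(\Cl_\kappa)}$ one has $\order(\Cl_{F_0}\otimes\F_p) \leq C_{F_0,p,\varepsilon}\,(\sqrt{D_{F_0}}\,)^\varepsilon$ since $(\sqrt{D_\kappa}\,)^\varepsilon > 1$. For the inductive step I would invoke the inequality established immediately before the statement: assuming $p^{\mathrm{rk}_p(\Cl_{F_{i-1}})} \leq C_{F_{i-1},p,\varepsilon}\,(\sqrt{D_{F_{i-1}}}\,)^\varepsilon$, the combination of Proposition \ref{propo3}, Proposition \ref{c} and the discriminant relation $D_{F_i} \geq D_{F_{i-1}}^p\,D_N$ yields $p^{\mathrm{rk}_p(\Cl_{F_i})} \leq C_{F_i,p,\varepsilon}\,(\sqrt{D_{F_i}}\,)^\varepsilon$ with $C_{F_i,p,\varepsilon} = C_{F_{i-1},p,\varepsilon}^{\,p}\cdot c_{F_{i-1},p,\varepsilon}$.

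Iterating this relation $e$ times from $F_0 = \Kappa$ to $F_e = F$ gives $\order(\Cl_F\otimes\F_p) = p^{\mathrm{rk}_p(\Cl_F)} \leq C_{F,p,\varepsilon}\,(\sqrt{D_F}\,)^\varepsilon$, and it remains only to unwind the constant. The recursion solves explicitly to $\log C_{F,p,\varepsilon} = p^e\log C_{\kappa,p,\varepsilon} + \sum_{i=1}^{e} p^{e-i}\log c_{F_{i-1},p,\varepsilon}$, a finite expression entirely determined by the input data. The point I would then verify is the \emph{uniformity} of this constant over the whole family: each step constant $c_{F_{i-1},p,\varepsilon}$ from Proposition \ref{c} depends on $F_{i-1}$ only through its degree $d_{F_{i-1}} = p^{i-1}d_\kappa$ (the rough bound used in Proposition \ref{propo3}), and $C_{\kappa,p,\varepsilon} = p^{\mathrm{rk}_p(\Cl_\kappa)}$ depends on $\Kappa$ only through $\rho := \mathrm{rk}_p(\Cl_\kappa)$. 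Since $d_{F_{i-1}}$ is the same for \emph{every} tower of the family once $d_\kappa$ is fixed, the resulting $C_{F,p,\varepsilon}$ is a function of $(d_\kappa,\rho,p^e,\varepsilon)$ alone, matching the refined form $C_{\kappa,p^e,\varepsilon} = C_{d,\rho,p^e,\varepsilon}$ of the Main results.

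The genuinely delicate work is not in this assembly but has already been discharged upstream, and I would flag two points as the crux. First, the induction hypothesis at level $F_{i-1}$ must be available with a constant that does \emph{not} anticipate the number $N = N_{i-1}$ of tame primes ramified at the next step; the uniformity over $N$ enters through Proposition \ref{c}, which rests on the Montgomery--Vaughan lower bound for $\sum_{j\leq N}\log q_j$ and is precisely what forces $p^{(p-1)(N+2)d_k}(\sqrt{D_N}\,)^{-\varepsilon}$ to remain bounded as $N\to\infty$. Second, one must justify the passage from the \emph{fictitious} pairing of an arbitrary genus-theoretic $p$-rank with the \emph{minimal} discriminant $D_N$ to the true pair $(\mathrm{rk}_p(\Cl_{F_i}),D_{F_i})$; this is the monotonicity recorded in Remark \ref{rema}, namely that replacing true data by $(N,D_N)$ can only increase the left-hand exponent and decrease the discriminant, so any effective tower satisfies the $\varepsilon$-inequality a fortiori. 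Granting these, the induction closes. The only care needed is to keep $\varepsilon$ fixed throughout, so that the finitely many constants $c_{F_{i-1},p,\varepsilon}$ and the final $C_{\kappa,p^e,\varepsilon}$ are all legitimate and computable, completing the proof of the theorem.
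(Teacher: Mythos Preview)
Your proposal is correct and follows essentially the same approach as the paper: a finite induction on the levels of the tower, with base case $C_{\kappa,p,\varepsilon}=p^{\mathrm{rk}_p(\Cl_\kappa)}$ and inductive step combining Proposition~\ref{propo3}, Proposition~\ref{c}, and the discriminant relation to obtain the recursion $C_{F_i,p,\varepsilon}=C_{F_{i-1},p,\varepsilon}^{\,p}\cdot c_{F_{i-1},p,\varepsilon}$. Your explicit unwinding of the constant and your check that each $c_{F_{i-1},p,\varepsilon}$ depends only on $d_{F_{i-1}}=p^{i-1}d_\kappa$ make the uniformity over the family slightly more explicit than in the paper, but the argument is the same.
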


\begin{remark} \label{Rank}
{\rm Let $r \geq 1$. It is obvious that, using the filtration 
$(M^*_h)_{h \geq 0}$, analogous computations with
$M^*[p^r] := \{x \in M^*, \ x^{p^r}=1\}$ give (from \eqref{ineqfond}):
$$\order M^*[p^r] = M^*_{r\,(p-1)} \leq (\order M^*_1)^{r \, (p-1)}, $$
then, for $M^*=\Cl_K^*$ and ${\rm rk}_p(\Cl_K^{*\,G}) \leq {\rm rk}_p(\Cl_k) + (N+2)\,d_k$ 
(Proposition \ref{propo2}), written under the form
$(\order \Cl_K^{* \,G})^{r\,(p-1)} \leq 
p^{r\,(p-1)\, {\rm rk}_p(\Cl_k) + r\,(p-1)\, (N+2)\,d_k}$, we get:
$$\order (\Cl_K^*\otimes \Z/p^r \Z) \leq 
p^{r\,(p-1)\,{\rm rk}_p(\Cl_k) + r\,(p-1)\, (N+2)\,d_k}. $$

Whence, using
$1 \to \Cl_K^*\otimes \Z/p^r \Z \to \Cl_K \otimes \Z/p^r \Z
\to \Cl_k \otimes \Z/p^r \Z$, and since 
$\Cl_k \otimes \Z/p^r \Z \leq p^{r \,{\rm rk}_p(\Cl_k)}$, we obtain:
$$\order (\Cl_K \otimes \Z/p^r \Z) \leq 
p^{r\,p\,{\rm rk}_p(\Cl_k) + r\,(p-1)\, (N+2)\,d_k}. $$}
\end{remark}

Finally, we may consider the following statement as a consequence
of the above computations for $r=1$:

\begin{corollary} \label{CRank}
Let $r \geq 1$ be a fixed integer. Then, for all $\varepsilon > 0$, 
there exists a constant $C^{(r)}_{\kappa ,p^e,\varepsilon}$ such that 
$\order (\Cl_F \otimes \Z/p^r\Z) \leq C^{(r)}_{\kappa ,p^e,\varepsilon}
(\sqrt{D_F}\,)^\varepsilon$.
\end{corollary}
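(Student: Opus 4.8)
The plan is to deduce the corollary directly from Theorem \ref{thmf} (the case $r=1$) by an elementary $\varepsilon$-rescaling, combined with the trivial bound relating $\order(\Cl_F \otimes \Z/p^r\Z)$ to the $p$-rank. The whole arithmetic content is already packaged into Theorem \ref{thmf}; passing from $\F_p$ to $\Z/p^r\Z$ costs only a bounded power of the $p$-rank, and $\varepsilon$ being arbitrary, this extra power is harmless.

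First I would record the elementary inequality
$$\order(\Cl_F \otimes \Z/p^r\Z) \leq p^{r\,{\rm rk}_p(\Cl_F)},$$
valid for every number field $F$ and every $r \geq 1$. Indeed, writing $\Cl_F \simeq \bigoplus_i \Z/p^{a_i}\Z$ with exactly ${\rm rk}_p(\Cl_F)$ cyclic summands, one has $\Cl_F \otimes \Z/p^r\Z \simeq \bigoplus_i \Z/p^{\min(a_i,r)}\Z$, so each of the ${\rm rk}_p(\Cl_F)$ factors contributes at most $p^r$ to the order.

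Next, given $\varepsilon > 0$ and $r \geq 1$ fixed, I would apply Theorem \ref{thmf} with $\varepsilon' := \varepsilon/r > 0$ in place of $\varepsilon$; this yields a constant $C_{\kappa,p^e,\varepsilon/r}$ with $p^{{\rm rk}_p(\Cl_F)} = \order(\Cl_F \otimes \F_p) \leq C_{\kappa,p^e,\varepsilon/r}\,(\sqrt{D_F})^{\varepsilon/r}$ for all $F \in {\mathscr F}_{\!\!\kappa}^{p^e}$. Raising this to the power $r$ and combining with the elementary bound above gives
$$\order(\Cl_F \otimes \Z/p^r\Z) \leq p^{r\,{\rm rk}_p(\Cl_F)} \leq (C_{\kappa,p^e,\varepsilon/r})^r\,(\sqrt{D_F})^\varepsilon,$$
so one may take $C^{(r)}_{\kappa,p^e,\varepsilon} := (C_{\kappa,p^e,\varepsilon/r})^r$.

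There is essentially no obstacle here, the difficulty having been discharged in Theorem \ref{thmf}. As an alternative one could instead run the induction of Section 3 directly for $\Z/p^r\Z$, using the step-by-step estimate of Remark \ref{Rank}; but since that estimate bounds $\order(\Cl_K \otimes \Z/p^r\Z)$ in terms of ${\rm rk}_p(\Cl_k)$ rather than in terms of $\order(\Cl_k \otimes \Z/p^r\Z)$, such an induction would in any case fall back on the $p$-rank bound of Theorem \ref{thmf}, making the rescaling argument the more economical route. The only point worth a word of care is that the constant genuinely depends on $r$ through the substitution $\varepsilon \mapsto \varepsilon/r$, which is exactly why the statement fixes $r$ in advance and does not reach the strong $\varepsilon$-conjecture for $\Cl_F \otimes \Z_p$.
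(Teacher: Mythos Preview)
Your proof is correct and in fact more direct than the paper's own argument. The paper chooses to rerun the induction of Section~3 using the step bound of Remark~\ref{Rank}, observing that the only effect of $r$ is to replace
$$X(N)=(p-1)(N+2)\,d_k\,{\rm log}(p)-\varepsilon\cdot\hbox{$\frac{p-1}{2}$}\sum_{j=1}^N{\rm log}(j)$$
by $X^{(r)}(N)=r\,(p-1)(N+2)\,d_k\,{\rm log}(p)-\varepsilon\cdot\hbox{$\frac{p-1}{2}$}\sum_{j=1}^N{\rm log}(j)$, which leaves the dominant term $-\varepsilon\,\hbox{$\frac{p-1}{2}$}\,N\,{\rm log}(N)$ untouched; the induction then proceeds as before with $r$ absorbed into the constants.

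You instead apply Theorem~\ref{thmf} once with $\varepsilon/r$ and use the elementary bound $\order(\Cl_F\otimes\Z/p^r\Z)\leq p^{r\,{\rm rk}_p(\Cl_F)}$. This is cleaner: it makes transparent that the Corollary is a purely formal consequence of Theorem~\ref{thmf}, with no new arithmetic input, and it even yields the explicit constant $C^{(r)}_{\kappa,p^e,\varepsilon}=(C_{\kappa,p^e,\varepsilon/r})^r$. You already anticipate the paper's route in your final paragraph and correctly note that, since the step bound of Remark~\ref{Rank} is expressed via ${\rm rk}_p(\Cl_k)$ anyway, rerunning the induction offers no real gain over the rescaling.
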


\begin{proof} The introduction of $r$ changes:
$$X(N) = (p-1) (N+2)\,d_k \,{\rm log}(p)
 - \varepsilon  \cdot \hbox{$\frac{p-1}{2}$}\,\sm_{j=1}^N  {\rm log} (j)$$ 
into the expression:
$$X^{(r)}(N) = r\,(p-1) (N+2)\,d_k\, {\rm log}(p)  - \varepsilon  
\cdot \hbox{$\frac{p-1}{2}$}\,\sm_{j=1}^N  {\rm log} (j), $$ 

which does not modify the dominant term 
$- \varepsilon \, \hbox{$\frac{p-1}{2}$}\,N\, {\rm log}(N)$
coming from the right term.
Then the induction is similar with the exponent $r$ which is a constant
and does not modify the reasoning (we omit the details).
\end{proof}

\begin{remark} {\rm
 If $F \in {\mathscr F}_{\!\!\kappa}^{p^e}$ is contained in 
the $p$-Hilbert tower of $\Kappa$ (which defines a sub-family ${\mathscr F}'^{p^e}_{\!\!\kappa}$
of $p$-cyclic towers when $F$ varies, especially when the $p$-Hilbert tower is infinite), 
we have $D_F= D_\kappa ^{p^e}$, whence:
$$\order (\Cl_F \otimes \F_p) \leq 
C_{\kappa ,p^e,\varepsilon} \cdot (\sqrt{D_\kappa}\,)^{\varepsilon \,p^e}; $$
 
renormalizing $\varepsilon$, since $p^e$ is a constant, we
may write: for all $\varepsilon > 0$, there exists a constant 
$C'_{\kappa ,p^e,\varepsilon}$ such that, for all such unramified 
$p$-towers $F \in {\mathscr F}'^{p^e}_{\!\!\kappa}$:
$$\order (\Cl_F \otimes \F_p) \leq C'_{\kappa ,p^e,\varepsilon} \cdot 
(\sqrt{D_\kappa}\,)^{\varepsilon}. $$
For other approaches about $p$-ranks in towers as the degree grows, 
see for instance Hajir \cite{Haj} and Hajir--Maire \cite{HM}.}
\end{remark}

\section{The $p$-rank $\varepsilon$-conjecture in $p$-ramification theory}
\label{sec4}

We shall replace, for the family 
${\mathscr F}_{\!\!\kappa}^{p^e}$, the $p$-class group $\Cl_F$ by the Galois group 
${\mathcal A}_F$ of the maximal $p$-ramified
abelian pro-$p$-extension ${\mathcal H}_F^{\rm ab}$ of $F$ or its
torsion group ${\mathcal T}_F$ \cite[III.2, IV.3]{Gr0}.
As we know, this pro-$p$-group ${\mathcal A}_F$ is a fundamental invariant 
related to the $p$-class group $\Cl_F$, the normalized $p$-adic regulator 
${\mathcal R}_F$ and the number of independent $\Z_p$-extensions of $F$
($\Z_p$-rank of ${\mathcal A}_F$ depending on 
Leopoldt's conjecture); see, e.g., \cite[IV,\S\S1,2,3]{Gr0}, \cite{Gr3} and the 
very complete bibliography of \cite{Gr1} for the story of abelian 
$p$-ramification theory, especially the items 
[3, 16, 17, 18, 19, 26, 40, 50, 57, 58, 59, 63, 65, 67, 70, 72].

\smallskip
Give some recalls about ${\mathcal A}_F$, ${\mathcal T}_F$ and the
corresponding fixed point formulas.
We assume the Leopoldt conjecture for $p$ in all the fields considered.

\medskip
Let ${\mathcal G}_F$ be the Galois group of the maximal pro-$p$-extension 
${\mathcal H}_F$ of $F$, $p$-ramified (i.e., unramified outside $p$ and 
non-complexified (= totally split) at the real infinite places of $F$ when $p=2$).

\smallskip
Its abelianized ${\mathcal A}_F = {\rm Gal}({\mathcal H}_F^{\rm ab}/F)$ 
is a $\Z_p$-module of finite type for which ${\mathcal T}_F := 
{\rm tor}_{\Z_p}({\mathcal A}_F)$, isomorphic to the dual of the 
cohomology group ${\rm H}^2({\mathcal G}_F,\Z_p)$ \cite{Ng},  fixes the 
compositum $\wt F$ of the $\Z_p$-extensions of $F$.
Then:
$${\mathcal A}_F \simeq \Z_p^{r_2^{}(F)+1} \plus {\mathcal T}_F, $$

where $r_2^{}(F)$ is the number of complex embeddings of $F$.

\smallskip
Let $U_F := \bigoplus_{{\mathfrak p} \mid p} U_{\mathfrak p}$, be the product of 
the principal local units of the completions $F_{\mathfrak p}$ of $F$ at the $p$-places,
let $\ov {E}_F$ be the closure in $U_F$ of the diagonal image of the group $E_F$ of global units 
and let 
$${\mathcal W}_F := {\rm tor}_{\Z_p}(U_F) \big/{\rm tor}_{\Z_p}(\ov {E}_F) =
{\rm tor}_{\Z_p}(U_F) \big/ \mu_p(F)$$ 

under Leopoldt's conjecture. 

\smallskip
Then $U_{F}/\ov {E}_{\!F}$ (resp. ${\mathcal W}_F$) fixes the $p$-Hilbert class 
field $H_F$ (resp.  the Bertran\-dias--Payan field ${\mathcal H}_F^{\rm bp}$) and
${\mathcal R}_F$ is the normalized $p$-adic regulator of $F$ (classical 
$p$-adic regulator up to an obvious $p$-power, cf. \cite[Proposition 5.2]{Gr3}):

\unitlength=1.0cm 
\vbox{\hbox{\begin{picture}(10.5,6.2)
\put(6.5,4.50){\line(1,0){1.3}}
\put(8.7,4.50){\line(1,0){2.1}}
\put(3.85,4.50){\line(1,0){1.4}}
\put(9.2,4.11){\footnotesize $\simeq\! {\mathcal W}_F$}
\put(4.4,2.50){\line(1,0){1.0}}
\bezier{550}(3.8,4.9)(7.6,5.5)(10.8,4.9)
\put(6.2,5.4){\footnotesize ${\mathcal T}_F 
\simeq {\rm H}^2({\mathcal G}_F,\Z_p)^\ast$}
\put(3.50,2.9){\line(0,1){1.25}}
\put(3.50,0.9){\line(0,1){1.25}}
\put(5.7,2.9){\line(0,1){1.25}}
\bezier{300}(3.85,0.5)(4.8,0.8)(5.6,2.3)
\put(5.2,1.3){\footnotesize $\simeq \! \Cl_F$}
\bezier{300}(6.3,2.5)(8.5,2.6)(10.8,4.3)
\put(7.9,2.5){\footnotesize $\simeq \! U_{F}/\ov {E}_{\!F}$}
\put(10.85,4.4){${\mathcal H}_F^{\rm ab}$}
\put(5.4,4.4){$\wt F\!H_F$}
\put(7.85,4.4){${\mathcal H}_F^{\rm bp}$}
\put(6.7,4.10){\footnotesize $\simeq\! {\mathcal R}_F$}
\put(3.3,4.4){$\wt F$}
\put(5.5,2.4){$H_F$}
\put(2.9,2.4){$\wt F \!\cap \! H_F$}
\put(3.4,0.40){$F$}
\put(8.9,1.5){\footnotesize ${\mathcal A}_F$}
\bezier{500}(3.85,0.4)(9.5,0.8)(11.0,4.3)
\end{picture} }}

Let $K/k$ be {\it any extension} of number fields; then from \cite[Theorem IV.2.1]{Gr0} 
the transfer map ${\rm J} : {\mathcal A}_k \too {\mathcal A}_K$ is always injective
under Leopoldt's conjecture. This will be applied to the degree 
$p$ cyclic sub-extensions of a tower $F/\Kappa$. 

\smallskip
The analogue of the exact sequence (\ref{clg}) for class groups, 
used in the proof of Proposition \ref{propo2}, is given by the following 
result \cite[Proposition IV.3.2.1]{Gr0}:

\begin{proposition}
Let $K/k$ be any Galois extension of number fields and let $G = {\rm Gal}(K/k)$.
We have, under Leopoldt's conjecture, the exact sequence:

\smallskip
\centerline{$1 \too {\rm J}({\mathcal A}_k) \simeq {\mathcal A}_k 
\tooo {\mathcal A}_K^G \tooo 
\plus_{{\mathfrak l}_k \nmid\, p} \, \Z_p/ e_{{\mathfrak l}_k} \Z_p \too 0$,}

\smallskip\noindent
$ e_{{\mathfrak l}_k}$ being the ramification index of the 
prime ideals ${\mathfrak l}_k \nmid p$ of $k$ ramified in $K/k$.
\end{proposition}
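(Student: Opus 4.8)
The plan is to mirror the proof of Proposition \ref{propo2} --- the genus-theoretic fixed-point sequence \eqref{clg} for class groups --- transported into the idelic language of abelian $p$-ramification theory, where the role played by ideals is now played by the local units away from $p$. First I would fix the class field theory dictionary: under the reciprocity map, ${\mathcal A}_F = {\rm Gal}({\mathcal H}_F^{\rm ab}/F)$ is the quotient of the pro-$p$-completion of the idele class group of $F$ by the image of $\prod_{{\mathfrak l}\nmid p} U_{\mathfrak l}$, the subgroup of units at the finite primes not above $p$ (together with the splitting conditions at the real places when $p=2$). This presentation is functorial and $G$-equivariant, so both the transfer ${\rm J}:{\mathcal A}_k\to{\mathcal A}_K$ and the $G$-action are governed by the local components of the ideles.

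For the left-hand inclusion I would invoke \cite[Theorem IV.2.1]{Gr0}, quoted above, which gives --- under Leopoldt's conjecture --- the injectivity of ${\rm J}$, whence ${\rm J}({\mathcal A}_k)\simeq{\mathcal A}_k$. Since the image of a transfer map always lies in the submodule fixed by the conjugation action of $G$ (exactly as an extended ideal of $k$ is $G$-stable, so that ${\rm J}(\Cl_k)\subseteq\Cl_K^G$ in the proof of Proposition \ref{propo2}), one has ${\rm J}({\mathcal A}_k)\subseteq{\mathcal A}_K^G$. This yields exactness at the left and at ${\mathcal A}_k$; the real content to be extracted is the identification of the cokernel ${\mathcal A}_K^G/{\rm J}({\mathcal A}_k)$.

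The main work --- and the main obstacle --- is computing that cokernel, which I would do by taking $G$-cohomology (equivalently Tate cohomology $\widehat{\rm H}^0$) of the idelic presentation above and running the snake lemma, the vanishing ${\rm H}^1(G,K^\times)=1$ (Hilbert 90) producing the surjectivity just as the surjectivity of $\theta$ in \eqref{clg} was immediate. The primes ${\mathfrak p}\mid p$ contribute only through the local units $U_{\mathfrak p}$, and these are already carried faithfully into ${\mathcal A}_K$ by ${\rm J}$; hence they produce nothing in the cokernel. For a finite prime ${\mathfrak l}_k\nmid p$ of $k$ ramified in $K/k$ with ramification index $e_{{\mathfrak l}_k}$, the corresponding local factor has been divided out over $K$ but survives, through inertia, in the comparison with $k$, and its norm-cokernel contribution is the tame inertia quotient $\Z_p/e_{{\mathfrak l}_k}\Z_p$. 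Collecting these over all ramified ${\mathfrak l}_k\nmid p$ gives
$$\coprod\nolimits_{{\mathfrak l}_k\nmid p}\ \Z_p/e_{{\mathfrak l}_k}\Z_p \ \simeq\ \plus_{{\mathfrak l}_k\nmid p}\Z_p/e_{{\mathfrak l}_k}\Z_p,$$
which is exactly the announced cokernel, so the sequence is exact as stated.

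I expect the delicate point to be the bookkeeping at $p$: one must verify that the wild local units $U_{\mathfrak p}$ genuinely contribute nothing to ${\mathcal A}_K^G$ beyond ${\rm J}({\mathcal A}_k)$, i.e. that the relevant cohomology and $\Z_p$-torsion discrepancies vanish. This is precisely where Leopoldt's conjecture enters (it pins down the $\Z_p$-rank $r_2(F)+1$ of ${\mathcal A}_F$ and guarantees injectivity of ${\rm J}$), and it is the feature that distinguishes this argument from the purely ideal-theoretic genus formula \eqref{clg}. Granting this control at $p$, the result is the faithful ${\mathcal A}$-analogue of the Chevalley/genus sequence, with the global-unit norm cokernel of \eqref{clg} replaced by the sum of local inertia contributions at the tame primes ramified in $K/k$.
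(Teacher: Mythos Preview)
The paper does not prove this proposition at all: it is quoted verbatim as \cite[Proposition IV.3.2.1]{Gr0} and used as a black box, so there is no ``paper's own proof'' to compare against. Your sketch is therefore not a reconstruction of something in the paper but an attempt to reprove the cited result.

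As an outline of the standard proof, your approach is correct in spirit: the idelic description of ${\mathcal A}_F$ as the pro-$p$ quotient of the idele class group modulo the closure of $\prod_{{\mathfrak l}\nmid p} U_{\mathfrak l}$ (and the real places when $p=2$) is the right starting point, and taking $G$-invariants of this presentation does produce the inertia contributions $\Z_p/e_{{\mathfrak l}_k}\Z_p$ at the tame ramified primes. The injectivity of ${\rm J}$ under Leopoldt and the inclusion ${\rm J}({\mathcal A}_k)\subseteq{\mathcal A}_K^G$ are also handled correctly.

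Where your sketch is thin is precisely where you flag it: the assertion that the primes above $p$ ``contribute nothing to ${\mathcal A}_K^G$ beyond ${\rm J}({\mathcal A}_k)$'' is not justified by your argument. Saying the local units $U_{\mathfrak p}$ are ``carried faithfully into ${\mathcal A}_K$ by ${\rm J}$'' is not quite the right formulation, and invoking Leopoldt here does not by itself dispose of the wild cohomology --- one needs the explicit comparison of $(U_K\otimes\Z_p)^G$ with $U_k\otimes\Z_p$ and the control of the global-unit closure, which is where the actual work in \cite[IV.3]{Gr0} lies. Similarly, the surjectivity onto $\bigoplus_{{\mathfrak l}_k\nmid p}\Z_p/e_{{\mathfrak l}_k}\Z_p$ requires exhibiting the map concretely (via the local reciprocity at each ${\mathfrak l}_k$) rather than appealing loosely to Hilbert~90. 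None of this is wrong, but as written it is a plausibility argument rather than a proof; to make it rigorous you would essentially be reproducing the relevant pages of \cite{Gr0}.
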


\begin{corollary}\label{coroA}
We have ${\rm rk}_p({\mathcal A}_K^G) \leq
{\rm rk}_p({\mathcal A}_k) + t^{\rm ta}_k$, where $t^{\rm ta}_k$ is the number
of prime ideals ${\mathfrak l}_k \nmid p$ of $k$ ramified in $K/k$.
\end{corollary}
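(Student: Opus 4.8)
The plan is to read the inequality straight off the exact sequence displayed in the preceding Proposition, in exact parallel with the way Proposition \ref{propo2} was deduced from the exact sequence \eqref{clg}. The one general fact I would invoke is the subadditivity of the $p$-rank along a short exact sequence $1 \to A \to B \to C \to 0$ of finitely generated $\Z_p$-modules: tensoring with $\F_p$ and using right-exactness shows that $C \otimes \F_p$ is a quotient of $B \otimes \F_p$ by the image of $A \otimes \F_p$, whence ${\rm rk}_p(B) \leq {\rm rk}_p(A) + {\rm rk}_p(C)$. Applying this to
$$1 \too {\mathcal A}_k \too {\mathcal A}_K^G \too \plus_{{\mathfrak l}_k \nmid p} \Z_p/e_{{\mathfrak l}_k}\Z_p \too 0,$$
where the left-hand term is ${\rm J}({\mathcal A}_k) \simeq {\mathcal A}_k$ by the injectivity of the transfer recalled above (from \cite[Theorem IV.2.1]{Gr0}, under Leopoldt's conjecture), I obtain ${\rm rk}_p({\mathcal A}_K^G) \leq {\rm rk}_p({\mathcal A}_k) + {\rm rk}_p\big(\plus_{{\mathfrak l}_k \nmid p} \Z_p/e_{{\mathfrak l}_k}\Z_p\big)$.

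Next I would bound the right-hand term. Every ramification index $e_{{\mathfrak l}_k}$ occurring here is a power of $p$, so each summand $\Z_p/e_{{\mathfrak l}_k}\Z_p$ is a cyclic $p$-group, contributing exactly $1$ to the $p$-rank when $e_{{\mathfrak l}_k} > 1$ and vanishing when ${\mathfrak l}_k$ is unramified. Hence the $p$-rank of the direct sum equals the number of primes ${\mathfrak l}_k \nmid p$ of $k$ that are genuinely ramified in $K/k$, which is by definition $t^{\rm ta}_k$. Combining this with the previous inequality gives the asserted bound ${\rm rk}_p({\mathcal A}_K^G) \leq {\rm rk}_p({\mathcal A}_k) + t^{\rm ta}_k$.

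I do not anticipate any genuine obstacle: the statement is a purely formal consequence of the exact sequence, the only substantive input — the injectivity of ${\rm J}$, forcing the clean term ${\rm rk}_p({\mathcal A}_k)$ rather than something larger — being supplied upstream by $p$-ramification theory. The single point worth emphasizing is that $t^{\rm ta}_k$ counts only the tame (prime-to-$p$) ramified places, so the wildly ramified primes above $p$ never enter the estimate. This is in fact a simplification relative to the class-group case, where Proposition \ref{propo2} also carried the terms $t_{k,p}$ and ${\rm rk}_p(E_k/E_k^p)$; here $t^{\rm ta}_k$ alone will play, in the forthcoming induction along the tower, the role that $N$ (together with the $p$-adic contributions) played for $\Cl_F$.
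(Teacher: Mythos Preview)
Your proof is correct and is exactly the argument the paper leaves implicit: the corollary is read off directly from the exact sequence of the preceding Proposition via subadditivity of the $p$-rank. One small remark: that Proposition is stated for an arbitrary Galois extension $K/k$, so the $e_{{\mathfrak l}_k}$ are not literally $p$-powers in general; but each $\Z_p/e_{{\mathfrak l}_k}\Z_p$ is a cyclic $\Z_p$-module of $p$-rank at most $1$, so the bound ${\rm rk}_p\big(\bigoplus_{{\mathfrak l}_k\nmid p} \Z_p/e_{{\mathfrak l}_k}\Z_p\big)\le t^{\rm ta}_k$ holds regardless --- with equality in the degree-$p$ cyclic case that is actually used.
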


Consider the framework of degree $p$ cyclic 
extensions $K/k = F_i/F_{i-1}$ related to a $p$-cyclic tower 
$F \in {\mathscr F}_{\!\!\kappa}^{p^e}$. 
Let ${\mathcal A}_K^* = {\rm Ker}(\Nu)$, with $\Nu =
{\rm J} \circ {\rm N}$, where ${\rm N} : {\mathcal A}_K \to {\mathcal A}_k$ is 
the restriction of automorphisms; we have similarly:
$$\hbox{${\rm rk}_p({\mathcal A}_K) \leq {\rm rk}_p({\mathcal A}_k) +
{\rm rk}_p({\mathcal A}_K^*)$ and
${\rm rk}_p({\mathcal T}_K) \leq {\rm rk}_p({\mathcal T}_k) +
{\rm rk}_p({\mathcal T}_K^*)$.} $$

Then considering Proposition \ref{propo1} (valid for the $\Z_p$-modules of finite
type $M={\mathcal A}_K$ since the $M^*_{h+1}/M^*_h$ are elementary finite
$p$-groups for all $h \geq 0$), one obtains from Corollary \ref{coroA} in $K/k$:
$${\rm rk}_p({\mathcal A}_K^*) \leq 
(p-1)\,{\rm rk}_p({\mathcal A}_K^{G*}) = 
(p-1)\,{\rm rk}_p({\mathcal A}_K^G) \leq 
(p-1)\, ({\rm rk}_p({\mathcal A}_k) + t^{\rm ta}_k), $$
whence:
$${\rm rk}_p({\mathcal A}_K) \leq p\,{\rm rk}_p({\mathcal A}_k)+(p-1)\, t^{\rm ta}_k. $$

Let $N$ be the number of tame primes $\ell_j$, ramified in $K/k$;
using the same upper bound $d_k := [k : \Q]$, we obtain:
\begin{equation}\label{r22}
{\rm rk}_p({\mathcal A}_K) \leq p\,{\rm rk}_p({\mathcal A}_k)+(p-1)\,N\,d_k. 
\end{equation}

\begin{theorem}\label{thmf2}
Let $p \geq 2$ be a prime number. Under Leopoldt's conjecture for~$p$, 
the $p$-rank $\varepsilon$-conjectures: 
$$\order ({\mathcal A}_F \otimes \F_p)
\ll_{\kappa ,p^e,\varepsilon} (\sqrt{D_F}\,)^\varepsilon \  \hbox{ and }\ 
\order ({\rm H}^2({\mathcal G}_F,\Z_p) \otimes \F_p)
\ll_{\kappa ,p^e,\varepsilon} (\sqrt{D_F}\,)^\varepsilon, $$ 

are fulfilled unconditionally for the family ${\mathscr F}_{\!\!\kappa}^{p^e}$ 
of $p$-cyclic-towers $F/\Kappa$ of degree~$p^e$. 
\end{theorem}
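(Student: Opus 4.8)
The plan is to run the very same induction on the tower $\Kappa = F_0 \subset \cdots \subset F_e = F$ that established Theorem \ref{thmf}, simply feeding in the $p$-ramification inequality \eqref{r22} in place of Proposition \ref{propo3}. First I would observe that, although ${\mathcal A}_F$ is infinite (it contains a $\Z_p$-free part of rank $r_2^{}(F)+1$), the tensor product ${\mathcal A}_F \otimes \F_p$ is finite of order $p^{{\rm rk}_p({\mathcal A}_F)}$, so the assertion is genuinely a statement about the $p$-rank and the induction may be carried on the quantity $p^{{\rm rk}_p({\mathcal A}_{F_i})}$. The base case $i=0$ is then immediate: since $(\sqrt{D_\kappa}\,)^\varepsilon > 1$, the bound $p^{{\rm rk}_p({\mathcal A}_\kappa)} \leq C_{\kappa,p,\varepsilon}(\sqrt{D_\kappa}\,)^\varepsilon$ holds by taking the constant large enough, exactly as for $\Cl_\kappa$.

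For the inductive step, put $k = F_{i-1}$, $K = F_i$ and assume $p^{{\rm rk}_p({\mathcal A}_k)} \leq C_{k,p,\varepsilon}(\sqrt{D_k}\,)^\varepsilon$. Exponentiating inequality \eqref{r22} gives
$$p^{{\rm rk}_p({\mathcal A}_K)} \leq \big(p^{{\rm rk}_p({\mathcal A}_k)}\big)^p \cdot p^{(p-1)\,N\,d_k} \leq C_{k,p,\varepsilon}^p \cdot \big(\sqrt{D_k^p}\,\big)^\varepsilon \cdot p^{(p-1)\,N\,d_k}.$$
It remains to absorb the factor $p^{(p-1)\,N\,d_k}$ into a power of $\sqrt{D_{K/k}}$. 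But its exponent is even smaller than the $(p-1)(N+2)\,d_k$ handled in Proposition \ref{c}, so that proposition applies a fortiori; combined with the minimal-discriminant Lemmas \ref{lem1} and \ref{lem11} and the inequality $D_{K/k} \geq D_N$, one gets $p^{(p-1)\,N\,d_k} \leq c_{k,p,\varepsilon}(\sqrt{D_{K/k}}\,)^\varepsilon$. Multiplying and using $D_K = D_k^p\,D_{K/k}$ then yields $p^{{\rm rk}_p({\mathcal A}_K)} \leq C_{K,p,\varepsilon}(\sqrt{D_K}\,)^\varepsilon$ with $C_{K,p,\varepsilon} = C_{k,p,\varepsilon}^p\,c_{k,p,\varepsilon}$, closing the induction after $e$ steps.

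The only points requiring care, all supplied in the preceding discussion, are the arithmetic inputs behind \eqref{r22}: the injectivity of the transfer ${\rm J}: {\mathcal A}_k \to {\mathcal A}_K$, which holds under Leopoldt's conjecture and furnishes the analogue of \eqref{inequality}; Corollary \ref{coroA} bounding ${\rm rk}_p({\mathcal A}_K^G)$; and the validity of Proposition \ref{propo1} for the $\Z_p$-module of finite type ${\mathcal A}_K$, legitimate because the successive quotients $M^*_{h+1}/M^*_h$ of the filtration \eqref{filtre} remain elementary finite $p$-groups even when $M^*$ carries a $\Z_p$-free part. This is precisely where Leopoldt's hypothesis enters, which is why the theorem is stated conditionally on it; it is also the only genuine obstacle, since the discriminant analysis of Section 3 is invariant-agnostic and the genus-type rank inequality \eqref{r22} is already in hand.

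Finally I would deduce the ${\rm H}^2$ statement from the ${\mathcal A}_F$ statement at no extra cost. Since ${\mathcal T}_F \simeq {\rm H}^2({\mathcal G}_F,\Z_p)^\ast$ and duality preserves the $p$-rank of a finite $p$-group, one has $\order({\rm H}^2({\mathcal G}_F,\Z_p) \otimes \F_p) = p^{{\rm rk}_p({\mathcal T}_F)}$; moreover ${\rm rk}_p({\mathcal T}_F) \leq {\rm rk}_p({\mathcal A}_F)$ because ${\mathcal T}_F = {\rm tor}_{\Z_p}({\mathcal A}_F)$ is a submodule. Hence the bound just proved for $\order({\mathcal A}_F \otimes \F_p)$ bounds $\order({\rm H}^2({\mathcal G}_F,\Z_p) \otimes \F_p)$ as well, completing the proof.
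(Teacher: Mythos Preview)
Your proof is correct and follows essentially the same route as the paper: both feed inequality \eqref{r22} into the induction and discriminant machinery of Section~3 already developed for $\Cl_F$. The only cosmetic difference is that the paper first subtracts the $\Z_p$-free-rank contribution from \eqref{r22} to get a recursive bound ${\rm rk}_p({\mathcal T}_K) \leq p\,{\rm rk}_p({\mathcal T}_k)+(p-1)(N+1)\,d_k$ and runs the induction on ${\mathcal T}$ directly, whereas you run it on ${\mathcal A}$ and invoke ${\rm rk}_p({\mathcal T}_F)\leq{\rm rk}_p({\mathcal A}_F)$ only at the end---both are fine.
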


\proof
For any number field $L$, we have ${\rm rk}_p({\mathcal A}_L) =
r_2(L)+1 + {\rm rk}_p({\mathcal T}_L)$ and $r_2(L)+1$ is the free rank,
${\rm rk}_{\Z_p}^{}({\mathcal A}_L)$, of ${\mathcal A}_L$. Thus  we get, 
from relation (\ref{r22}) and ${\rm rk}_{\Z_p}^{}({\mathcal A}_K) = 
{\rm rk}_{\Z_p}^{}({\mathcal A}_k) + {\rm rk}_{\Z_p}^{}({\mathcal A}_K^*)$:
$$r_2(K)+1 + {\rm rk}_p({\mathcal T}_K) \leq
p\,(r_2(k)+1 + {\rm rk}_p({\mathcal T}_k)) + (p-1)\,N\,d_k, $$
whence, since $r_2(K) -  r_2(k) = {\rm rk}_{\Z_p}^{}({\mathcal A}_K^*) \geq 0$:

\medskip
\centerline{${\rm rk}_p({\mathcal T}_K) \leq p\,{\rm rk}_p({\mathcal T}_k) +
p\,r_2(k) + p - r_2(K) - 1  + (p-1)\,N\,d_k$}

\medskip
\centerline{\hspace{1.0cm}$\leq p\,{\rm rk}_p({\mathcal T}_k) + (p-1)\,r_2(k) + p-1 + (p-1)\,N\,d_k$}

\medskip
\centerline{\hspace{-2.0cm}$\leq p\,{\rm rk}_p({\mathcal T}_k)+(p-1)\,(N+1)\,d_k$}
 
\medskip
The inequalities are similar to that obtained for the $p$-ranks 
of usual class groups, whence the result since
${\rm rk}_p ({\rm H}^2({\mathcal G}_F,\Z_p)) = {\rm rk}_p ({\mathcal T}_F)$.
\qed \medskip

As for the $p$-class groups, we have, for $r \geq 1$ fixed:
$$\order ({\rm H}^2({\mathcal G}_F,\Z_p) \otimes \Z/p^r \Z)
\ll_{\kappa ,p^e,\varepsilon} (\sqrt{D_F}\,)^\varepsilon, \ \ 
\hbox{for all $F \in {\mathscr F}_{\!\!\kappa}^{p^e}$}. $$

\begin{remark} \label{divers}
{\rm Most arithmetic $p$-invariants, stemming from generalized class groups
$\Cl_\Sigma^S$ (regarding ramification and decomposition of given sets of places
in the corresponding ray class fields), fulfill the $p$-rank $\varepsilon$-conjecture 
for ${\mathscr F}_{\!\!\kappa}^{p^e}$.

In the same way, many other $p$-invariants are related to the fundamental
groups $\Cl_F$ and ${\mathcal T}_F$ by means of standard rank formulas
and/or dualities (e.g., reflection theorems detailed in \cite[Chapitre III]{Gr5}), 
so that all these invariants fulfill the $p$-rank 
$\varepsilon$-conjecture. One may cite:

\smallskip
(i) The normalized $p$-adic regulator ${\mathcal R}_F$ (obvious from the schema).

\smallskip
(ii) The regular kernel $R_F$ and the Hilbert kernel $W_F$ (from results of Tate; 
see \cite{Gr6} \cite[\S\,7.7.2, Theorem 7.7.3.1]{Gr0}, \cite{GJ});
then the corresponding study for the higher ${\rm K}$-theory \cite[\S\,12]{Gr5}.

\smallskip
(iii) The Jaulent logarithmic  class group \cite{BJ,J1,J2,J3} (finite under the conjecture of 
Gross and isomorphic to a quotient of ${\mathcal T}_F$), linked with precise formulas 
to the previous groups $\Cl_F$, ${\mathcal T}_F$, $W_F$ \cite{J1,J2}.}
\end{remark}

\section{Conclusion} 

The main question remains the case of a {\it strong $\varepsilon$-conjecture}, for 
such finite invariants $M$, saying that:

\medskip
\leftline{$\order (M_F \otimes \Z_p) \ll_{\kappa ,p^e,\varepsilon} (\sqrt{D_F}\,)^\varepsilon$
for all $F \in {\mathscr F}_{\!\!\kappa}^{p^e}$,}

\smallskip
\rightline{{\it except for a subfamily of ${\mathscr F}_{\!\!\kappa}^{p^e}$ of zero density}.}

\medskip
This restriction seems essential because of the existence of very rare 
fields giving exceptional large invariants $M$ as shown in \cite{Da,DaKM,Lam} for 
class groups (or \cite{Gr4} for torsion groups ${\mathcal T}$).
This is also justified, in the framework of $p$-class groups, by the Koymans--Pagano
density results \cite{KP} as analyzed in \cite{Gr00} for ${\mathscr F}_{\!\!\Q}^{p}$;
indeed, in any relative degree $p$ cyclic extension, the algorithm defining the
filtration $(M_h)_{h \geq 0}$ is a priori unbounded, giving possibly large $\order M$
contrary to the $p$-ranks (or the $\order M[p^r]$ as seen in Corollary \ref{Rank} 
which allows to take $r \gg 0$, but {\it constant} regarding the familly 
${\mathscr F}_{\!\!\kappa}^{p^e}$).

\smallskip
All the previous results on $p$-rank $\varepsilon$-inequalities
fall within the framework of ``genus theory'' at the prime $p$ 
for $p$-extensions; the case of degree $d$ number fields, when 
$p \nmid d$, is highly non-trivial.
For instance, the simplest case of the $3$-rank $\varepsilon$-conjecture
for quadratic fields $F$ remains open since one only knows that
$\order (\Cl_F \otimes \F_3) \ll_{\varepsilon} 
(\sqrt{D_F}\,)^{\frac{2}{3}+\varepsilon}$ (we refer to the bibliographies
of \cite{EV,W}, among other, for many generalizations and improvements 
of the exponents).

\smallskip
Indeed, the general case, regarding the $\ell$-invariants $M \otimes \F_\ell$,
$M \otimes \Z_\ell$ of $M$, in degree $d$ extensions, has a 
fundamental difficulty since complex analytic methods consider 
globally $\order M$ as upper bound (like ``$\order (M \otimes \F_\ell) 
\leq \order M$'', in the framwork of Brauer--Siegel type results 
\cite{N,PTW2,TV,Zy}) and often assume GRH, so that the ``bad primes'' 
$p\!\mid\! d$ may give large $p$-parts in $\order M$, thus analytic difficulties
as explained in \cite[\S\,2]{Gr00}; this is due to the lack of direct $p$-adic analytic 
tools.

\end{document}